\newtheorem{theorem}{Theorem}[section]
\newtheorem{proposition}{Proposition}[section]
\newtheorem{lemma}{Lemma}[section]
\newtheorem{corollary}{Corollary}[section]
\newtheorem{OldTheorem}{Theorem}
\theoremstyle{definition}
\newtheorem{definition}{Definition}[section]
\theoremstyle{definition}
\newtheorem{remark}{Remark}[section]
\theoremstyle{remark}
\def\BMO{{\rm BMO }}
\def\BO{{\rm BO}}
\def\OSC{{\rm OSC}}
\def\SUP{{\rm SUP}}
\def\INF{{\rm INF}}
\def\supp{{\rm supp\,}}
\def\esssup{{\rm esssup\, }}
\def\essinf{{\rm essinf\, }}
\def\dist{{\rm dist}}
\def\MM^d{\ensuremath{\mathfrak M}}
\def\MM{\ensuremath{\mathcal M}}
\def\ZM{\ensuremath{\mathfrak M}}
\def\ZB{\ensuremath{\mathscr B}}
\def\zB{\ensuremath{\mathfrak B}}
\def\ZA{\ensuremath{\mathscr A}}
\def\ZZ{\ensuremath{\mathbb Z}}
\def\ZQ{\ensuremath{\mathcal Q}}
\def\ZI{\ensuremath{\textbf 1}}
\def\ZK{\ensuremath{\mathcal K}}
\def\ZR{\ensuremath{\mathbb R}}
\def\ZT{\ensuremath{\mathbb T}}
\def\pr{\ensuremath{\mathrm {pr}}}
\def\ZL{\ensuremath{\mathcal L}}
\def\ZG{{\mathscr G\,}}
\numberwithin{equation}{section}
\def\md#1#2\emd{\ifx0#1
	\begin{equation*} #2 \end{equation*}\fi  
	\ifx1#1\begin{equation}#2\end{equation}\fi   
	\ifx2#1\begin{align*}#2\end{align*}\fi   
	\ifx3#1\begin{align}#2\end{align}\fi    
	\ifx4#1\begin{gather*}#2\end{gather*}\fi  
	\ifx5#1\begin{gather}#2\end{gather}\fi   
	\ifx6#1\begin{multline*}#2\end{multline*}\fi  
	\ifx7#1\begin{multline}#2\end{multline}\fi  
	\ifx8#1\begin{multline*}\begin{split}#2\end{split}\end{multline*}\fi
	\ifx9#1\begin{multline}\begin{split}#2\end{split}\end{multline}\fi
}
\newcommand {\e }[1]{\eqref{#1}}
\newcommand {\lem }[1]{Lemma \ref{#1}}
\newcommand {\rem }[1]{Remark \ref{#1}}
\newcommand {\cor }[1]{Corollary \ref{#1}}
\newcommand {\pro }[1]{Proposition \ref{#1}}
\newcommand {\trm }[1]{Theorem \ref{#1}}
\newcommand {\sect }[1]{Section \ref{#1}}
\newcommand {\df }[1]{Definition \ref{#1}}
\title[] {Bounded oscillation operators on BMO spaces}
\author{Grigori A. Karagulyan}
\address{Institute of Mathematics of NAS of RA, Marshal Baghramian ave., 24/5, Yerevan, 0019, Armenia} 
\address{Faculty of Mathematics and Mechanics, Yerevan State
University, Alex Manoogian, 1, 0025, Yerevan, Armenia} 
\email{g.karagulyan@ysu.am}
\thanks{The work was supported by the Higher Education and Science Committee
	of RA, in the frames of the research project 21AG‐1A045}
\subjclass[2010]{42B20, 42B35, 43A85}
\keywords{Calder\'on-Zygmund  operators,  BMO spaces, martingales, $\BO$ operators}
\begin{document}

\begin{abstract}
	Bounded Oscillation ($\BO$) operators were recently introduced in \cite{Kar3}, where it was proved that many operators in harmonic analysis (Calder\'on-Zygmund operators, Carleson type operators, martingale transforms, Littlewood-Paley  square functions, maximal operators, etc) are $\BO$ operators. $\BO$ operators are defined on abstract measure spaces equipped with a basis of abstract balls. The abstract balls in their definition owe four basic properties of classical balls in $\ZR^n$, which are crucial in the study of singular operators on $\ZR^n$. Among various properties studied in these papers it was proved that $\BO$ operators allow pointwise sparse domination, establishing the $A_2$-conjecture for those operators. In the present paper we study boundedness properties of $\BO$ operators on $\BMO$ spaces. In particular, we prove that general $\BO$ operators boundedly map $L^\infty$ into $\BMO$, and under a logarithmic localization condition those map $\BMO$ into itself. We obtain these properties as corollaries of new local type bounds, involving oscillations of functions over the balls. We apply the results in the $\BMO$ estimations of Calder\'on-Zygmund  operators, martingale transforms, Carleson type operators, as well as  in the unconditional basis properties of general wavelet type systems in atomic Hardy spaces $H^1$.
\end{abstract}

	\maketitle  

 \tableofcontents		
\section{Introduction}\label{S4}
\subsection{Operators on $\BMO$ spaces}
The space of functions of bounded mean oscillation, $\BMO(\ZR^d)$, was introduced by John and Nirenberg \cite{JoNi}. This space is the collection of locally integrable functions  $f$ on $\ZR^d$ such that
\begin{equation}\label{x0}
	\|f\|_{\BMO}=\sup_{B}\frac{1}{|B|}\int_B|f-f_B|<\infty,
\end{equation}
where $f_B$ denotes the mean of $f$ over a ball $B\subset \ZR^d$, and the supremum is taken over all the balls $B$. This is not properly a norm, since any function which is constant almost everywhere has zero oscillation. However, modulo constant, $\BMO$ becomes a Banach space with respect to the norm \e{x0}. The space $\BMO$ plays an important role in harmonic analysis and in the study of partial differential equations. One can check that
$L^\infty(\ZR^d)\subset \BMO(\ZR^d)$, but there exist unbounded $\BMO$ functions. However, the fundamental inequality of John-Nirenberg \cite{JoNi} states that $\BMO$ functions have exponential decay over the balls. Namely, for any ball $B\subset \ZR^d$ we have
\begin{equation}\label{JN}
	|\{x\in B:\, |f(x)-f_B|>\lambda\}|\le c_1|B|\exp(-c_2\lambda/\|f\|_\BMO),\quad \lambda>0,
\end{equation}
where $c_1$ and $c_2$ are positive absolute constants. Some remarkable results about $\BMO$ are the duality relationship between $\BMO$ and the Hardy space $H^1$ due to C.~Fefferman \cite{Fef} and a deep connection between the Carleson measures and the $\BMO$ functions established by C.~Fefferman and E.~Stein \cite{FeSt}.
There are different characterizations and generalizations of $\BMO$ spaces. 

It is of interest the problem of boundedness of certain operators in harmonic analysis on $\BMO$ spaces.  The fact that ordinary Calderón–Zyg\-mund operators map boundedly $L^\infty$ into $\BMO$ was independently obtained by Peetre \cite{Pee}, Spanne \cite{Spa} and Stein \cite {Ste}. Peetre \cite{Pee} also observed that translation-invariant Calderón–Zygmund operators actually map $\BMO$ to itself. Bennett-DeVore-Sharpley in \cite{BDS} proved that the Hardy-Littlewood uncentered maximal function is bounded on $\BMO(\ZR^d)$.

\subsection{Bounded oscillation operators}
Bounded oscillation ($\BO$) operators were recently introduced in \cite{Kar3} and it was proved that many operators in harmonic analysis (Calder\'on-Zygmund operators, maximally modulated singular operators, Carleson type operators, martingale transforms, Littlewood-Paley  square functions, maximal operators, etc) are $\BO$ operators. Various properties of $\BO$ operators were studied in \cite{Kar1,Kar3} (see also \cite{Ming} for a multilinear generalization of $\BO$ operators and those properties). In the present paper we consider the problem of boundedness of $\BO$ operators on $\BMO$ spaces.
General $\BO$ operators are defined on abstract measure spaces equipped with a ball-basis. The definition of the ball-basis is a selection of basic properties of classical balls (or cubes) in $\ZR^n$ that are crucial in the study of singular operators on $\ZR^n$. 

\begin{definition} Let $(X,\ZM, \mu)$ be a measure space with a $\sigma$-algebra $\ZM$ and a measure $\mu$. A family of measurable sets $\ZB$ is said to be a ball-basis if it satisfies the following conditions: 
	\begin{enumerate}
		\item[B1)] $0<\mu(B)<\infty$ for any ball $B\in\ZB$.
		\item[B2)] For any points $x,y\in X$ there exists a ball $B\ni x,y$.
		\item[B3)] If $E\in \ZM$, then for any $\varepsilon>0$ there exists a (finite or infinite) sequence of balls $B_k$, $k=1,2,\ldots$, such that $\mu(E\bigtriangleup \cup_k B_k)<\varepsilon$.
		\item[B4)] For any $B\in\ZB$ there is a ball $ B^*\in\ZB $ (called {\rm hull-ball} of $B$), satisfying the conditions
		\begin{align}
			&\bigcup_{A\in\ZB:\, \mu(A)\le 2\mu(B),\, A\cap B\neq\varnothing}A\subset  B^*,\label{h12}\\
			&\qquad\qquad\mu\left(B^*\right)\le \ZK\mu(B),\label{h13}
		\end{align}
		where $\ZK$ is a positive constant independent of $B$. 
	\end{enumerate}
\end{definition}
Ball-bases can additionally have the following properties.
\begin{definition}\label{DD}
	We say that a ball-basis $\ZB$ is doubling if there is a constant $\eta>2$ such that for any ball $B$ with $B^*\neq X$ one can find a ball $B'\supset B$ such that
	\begin{align}
		2\mu(B)\le \mu(B')\le\eta  \cdot \mu(B).\label{h73}
	\end{align}
\end{definition}
\begin{definition}\label{D3}
	A ball basis $\ZB$ is said to be regular if there is a constant $0<\theta<1$ such that for any two balls $B$ and $A$, satisfying $\mu(B)\le \mu(A)$, $B\cap A\neq \varnothing$ we have $\mu(B^*\cap A)\ge \theta \mu(B^*)$.
\end{definition}

Here are some basic examples of ball-bases:
\begin{enumerate}
	\item The family of Euclidean balls (or cubes) in $\ZR^d$ is a doubling ball-basis.  
	\item Dyadic cubes in $\ZR^d$ form a doubling ball-basis.
	\item Let $\ZT= \ZR/2\pi$ be the unit circle. The family of arcs in $\ZT$ is a doubling ball-basis.
	\item The families of metric balls in measure spaces of homogeneous type form a ball-basis with the doubling condition (see \cite{Kar3}, sec. 7,  for the proof)
	\item Let $(X,\ZM, \mu)$ be a measure space and $\{\ZB_n:\, n\ge 0\}$ be collections of measurable sets (called filtration) such that
	\begin{itemize}
		\item $\ZB_0=\{X\}$ and each $\ZB_n$ forms a finite or countable partition of $X$,
		\item each $A\in \ZB_n$ is a union of some sets of $\ZB_{n+1}$,
		\item the collection $\ZB=\cup_{n\in\ZZ}\ZB_n$ generates the $\sigma$-algebra $\ZM$,
		\item for any points $x,y\in X$ there is a set $A\in X$ such that $x,y\in A$.
	\end{itemize}
	For any $A\in \ZB_n$ we denote by $\pr(A)$ the parent-ball of $A$, that is the unique element of $\ZB_{n-1}$, containing $A$. One can easily check that $\ZB$ satisfies  the ball-basis conditions B1)-B4), where for $A\in \ZB$ the hull-ball $A^*$ is the maximal element in $\ZB$, satisfying $A^*\supset A$ and $\mu(A^*)\le 2\mu(A)$. Observe that $A^*$ can coincide with $A$ and it occurs when $\mu(\pr(A))>2\mu(A)$.
	Besides, a martingale ball-basis is doubling if and only if $\mu(\pr(A) )\le c\mu(A)$ for any $A\in \ZB$ and for some constant $c>1$.
\end{enumerate}
We also note that all these ball-bases are regular (see \df{D3})

\subsection{Notations}
Let $(X,\ZM, \mu)$ be a measure space equipped with a ball-basis $\ZB$. Fix $1\le r<\infty $  and denote by $L^r_{loc}(X)$ the space of local $L^r$-integrable functions $f$ on $X$, namely, $\int_B|f|^r<\infty$ for any ball $B\in \ZB$. Denote by $L^0(X)$ the space of almost everywhere finite measurable functions on $X$. We set 
\begin{align}
	&f_B=\frac{1}{\mu(B)}\int_Bf,\\
	&	\langle f\rangle_B=\left(\frac{1}{\mu(B)}\int_B|f|^r\right)^{1/r}, \\
	&\langle f\rangle_B^*=\sup_{A\in \ZB:A\supset B}\langle f\rangle_{A},\label{y56}\\
	&\langle f\rangle_{\#,B}=\langle f-f_B\rangle_B=\left(\frac{1}{\mu(B)}\int_B|f-f_B|^r\right)^{1/r},
\end{align}
\begin{align}
	& \langle f\rangle_{\#, B}^*=\sup_{A\in \ZB:A\supset B}\langle f\rangle_{\#,A},\label{x5}\\
	&\SUP_E(f)=\esssup_{x\in E}|f(x)|,\\
	& \INF_E(f)=\essinf_{x\in E}|f(x)|,\label{y98}\\
	&\OSC_E(f)=\esssup_{x,x'\in E}|f(x)-f(x')|=\SUP_E(f)-\INF_E(f),\label{y97}\\
	&\OSC_{B,\alpha}(f)=\inf_{E\subset B:\, \mu(E)> \alpha\mu(B)}\OSC_E(f),\quad 0<\alpha <1,\label{y100}
\end{align}
where  $f\in L^r_{loc}(X)$,  $A, B\in \ZB$ and $E\subset X$ is a measurable set. Note that the $\alpha$-oscillation functional $\OSC_{B,\alpha}(f)$ can be equivalently defined as follows: that is $\OSC_{B,\alpha}(f)=\inf_{a<b}(b-a)$, where the infimum is taken over all the numbers $a<b$ satisfying $\mu\{x\in B:\, f(x)\in [a,b]\}>\alpha\mu(B)$. 

In the sequel positive constants depending only on our ball-basis $\ZB$ and the parameter $1\le r<\infty$ will be called admissible constants. The relation $a\lesssim b$ will stand for $a\le c\cdot b$, where $c>0$  is admissible. We write $a\sim b$ if the relations  $a\lesssim b$ and $b\lesssim a$ hold simultaneously. The notation $\log a$ will stand for $\log_2 a$. 
\subsection{Main results}
We let $(X,\mu)$ be a measurable  space equipped with a doubling ball-basis $\ZB$. We will consider a subclass of $\BO$ operators on $X$, which definition involves certain modulus of continuity. An operator $T:L^r(X)\to L^0(X)$ is called sublinear if it satisfies $|T(f+g)|\le |T(f)|+|T(g)|$.

\begin{definition}
	Let $\omega:[1,\infty)\to [0,1]$ be a non-increasing function and suppose $r\ge 1$. We say $T:L^r(X)\to L^0(X)$ is $\BO_\omega$ operator if it is sublinear, i.e. $|T(f+g)|\le |T(f)|+|T(g)|$, and for any function $f\in L^r(X)$ and a ball $B$ we have
	\begin{align}
		\OSC_B\big(T (f\cdot \ZI_{X\setminus B^*})\big) \le \ZL_{\omega}(T)\sup_{A\in \ZB:A\supset B}\bigg(\omega\left(\frac{\mu(B)}{\mu(A)}\right)\cdot \langle f\rangle_{A}\bigg),	\label{y83}
	\end{align}
where $ \ZL_{\omega}(T)$ is a constant, depending only on $\omega$ and the operator $T$. The class of operators satisfying \e{y83} will be denoted by $\BO_\omega(X)$ and we suppose that $\ZL_{\omega}(T)$ is the least constant such that \e{y83} holds for all the functions $f\in L^r(X)$ and balls $B$. 
\end{definition}
\begin{remark}
	Condition \e{y83} is an extended version of the localization property, that was considered in the original definition of $\BO$ operators in \cite{Kar3}. Moreover,  if $\omega(t)\equiv 1$, then  \e{y83} exactly gives the definition of $\BO$ operators on measure spaces with a doubling ball-basis, since in that case the right hand side of \e{y83} becomes $\ZL(T)\langle f\rangle_B^*=\ZL_{\omega}(T)\langle f\rangle_B^*$ (see \cite{Kar3}). In that case we will use notation $\BO(X)$ instead of $\BO_\omega(X)$.
\end{remark}
\begin{definition}\label{D2}
	 We say that an operator $T$ is vanishing on constants (or simply vanishing) if for any ball $B\in \ZB$ and any $\varepsilon>0$ there exists a ball $B'\supset B$ such that for any ball $B''\supset B'$ the inequality $\OSC_B\big(T (\ZI_{B''})\big)\le \varepsilon$ holds. A family of vanishing operators $T_\alpha :L^r(X)\to L^0(X)$ is said to be uniformly vanishing if the corresponding ball $B'$ can be chosen independently of $T_\alpha$.
\end{definition}
Our main result reads as follows.
\begin{theorem}\label{T5}
	Let $\ZB$ be a doubling ball-basis in a measure space  $X$ and suppose that $T_\alpha\in \BO_\omega(X)$ is a family of uniformly vanishing linear operators with $\omega(t)= \log^{-1}(1+t)$. If $T$ is one of these operators  
	\begin{equation}\label{r50}
		T^+(f)=\sup_\alpha T_\alpha (f), \quad T^-(f)=\inf_\alpha T_\alpha (f),\quad T^*(f)=\sup_\alpha |T_\alpha (f)|,
	\end{equation}
	and for a number $1\le r<\infty$ we have
		\begin{equation}\label{y84}
	\|T\|_{L^r(X)\to L^{r,\infty}(X)}<\infty,\quad \sup_\alpha\ZL_{\omega}(T_\alpha)<\infty,
	\end{equation}
	 then the bound
	\begin{equation}\label{y30}
		\OSC_{B,\beta}\,T( f)\lesssim \left((1-\beta)^{-1/r}	\|T\|_{L^r(X)\to L^{r,\infty}(X)}+\sup_\alpha \ZL_{\omega}(T_\alpha)\right) \langle f\rangle^*_{\#,B}
	\end{equation}
	holds for any $f\in L^r(X)$, any ball $B\in \ZB$ and a number $0<\beta<1$.
\end{theorem}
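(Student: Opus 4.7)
The plan is to prove the oscillation estimate by splitting $f$ into three types of pieces: a local piece handled by the weak-$L^r$ bound, a far-away piece handled by the $\BO_\omega$ hypothesis together with a telescoping (chaining) estimate, and a ``constant'' piece handled by the vanishing hypothesis. Fix $B\in\ZB$, a number $0<\beta<1$, an auxiliary $\varepsilon>0$, and set $c:=f_{B^*}$. A preliminary observation is that the sublinear operators $T^{\pm}$ and $T^{*}$ inherit $\BO_\omega$ with constant $\sup_\alpha\ZL_\omega(T_\alpha)$ and are uniformly vanishing, since these properties pass to pointwise suprema/infima of the linear family. Using vanishing, fix a ball $B'\supset B$ such that $\OSC_B T(c\ZI_{B''})\le |c|\varepsilon$ for every $B''\supset B'$, and choose such a $B''$ large enough that also $\|c\ZI_{X\setminus B''}\|_r\le\varepsilon\,\mu(B)^{1/r}$. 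Decompose
\begin{equation*}
	f=f_1+f_2+f_3+f_4,\quad f_1=(f-c)\ZI_{B^*},\;f_2=(f-c)\ZI_{X\setminus B^*},\;f_3=c\ZI_{B''},\;f_4=c\ZI_{X\setminus B''}.
\end{equation*}

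For $f_1$, property B4 and the definition of the sharp maximal give $\|f_1\|_r\lesssim\mu(B)^{1/r}\langle f\rangle_{\#,B}^{*}$; Chebyshev applied to the weak-$(r,r)$ bound produces a set $E_1\subset B$ with $\mu(B\setminus E_1)\le \tfrac{1-\beta}{3}\mu(B)$ on which $|Tf_1|\lesssim(1-\beta)^{-1/r}\|T\|_{L^r\to L^{r,\infty}}\langle f\rangle_{\#,B}^{*}$. For $f_2$, the $\BO_\omega$ hypothesis yields
\begin{equation*}
	\OSC_B Tf_2\le\sup_\alpha\ZL_\omega(T_\alpha)\cdot\sup_{A\supset B}\omega\!\left(\frac{\mu(A)}{\mu(B)}\right)\langle f-c\rangle_A.
\end{equation*}
A standard doubling chain $B^{*}=A_0\subset A_1\subset\cdots\subset A_N$ furnished by \e{h73}, with $2\mu(A_k)\le\mu(A_{k+1})\le\eta\mu(A_k)$ and ending at a ball comparable to $A$, gives $|f_A-c|\lesssim\log(\mu(A)/\mu(B))\cdot\langle f\rangle_{\#,B}^{*}$, so that $\langle f-c\rangle_A\lesssim(1+\log(\mu(A)/\mu(B)))\langle f\rangle_{\#,B}^{*}$. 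Since $\omega(t)=\log^{-1}(1+t)$ satisfies $\omega(t)(1+\log t)\lesssim 1$ for $t\ge 1$, the supremum in $A$ is bounded by $\langle f\rangle_{\#,B}^{*}$ up to a constant, and thus $\OSC_B Tf_2\lesssim\sup_\alpha\ZL_\omega(T_\alpha)\langle f\rangle_{\#,B}^{*}$.

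For $f_3$ the vanishing hypothesis gives $\OSC_B Tf_3\le|c|\varepsilon$ directly, and for $f_4$ the weak-type bound together with the smallness of $\|f_4\|_r$ produces a further subset $E_4\subset B$ with $\mu(B\setminus E_4)\le\tfrac{1-\beta}{3}\mu(B)$ on which $|Tf_4|\le C\varepsilon(1-\beta)^{-1/r}\|T\|_{L^r\to L^{r,\infty}}$. Setting $E:=E_1\cap E_4$ gives $\mu(E)>\beta\mu(B)$, and sublinearity of $T$ lets us bound $\OSC_E Tf$ by the sum of the four contributions; letting $\varepsilon\to 0$ (with $B''$ chosen progressively larger) removes the vanishing errors and yields \e{y30}. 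The main technical obstacle is the ``constant piece'' when $\mu(X)=\infty$: one must simultaneously balance the vanishing-approximation ball $B''$ so that $\OSC_B T(c\ZI_{B''})$ is small and so that the weak-type norm of the residual $c\ZI_{X\setminus B''}$ is negligible relative to $\mu(B)$; a secondary obstacle is inheriting $\BO_\omega$ and uniform vanishing from the linear family $\{T_\alpha\}$ to $T^{\pm},T^{*}$, where the oscillation of a supremum is not in general controlled by the supremum of oscillations and some pointwise sublinearity argument is required.
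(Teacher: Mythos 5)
Your architecture is the right one and matches the paper's in spirit: a local piece controlled by the weak-$(r,r)$ bound and Chebyshev, a middle piece controlled by the $\BO_\omega$ hypothesis combined with a doubling chain giving $\langle f-f_B\rangle_A\lesssim(1+\log(\mu(A)/\mu(B)))\langle f\rangle_{\#,B}^*$ so that $\omega(t)=\log^{-1}(1+t)$ exactly cancels the logarithm, and a constant piece controlled by the vanishing hypothesis. That cancellation is precisely the paper's Lemma 4.2. But your decomposition breaks down when $\mu(X)=\infty$ and $c=f_{B^*}\neq 0$, and this is not a technicality you can defer: both $f_2=(f-c)\ZI_{X\setminus B^*}$ and $f_4=c\ZI_{X\setminus B''}$ then fail to lie in $L^r(X)$. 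Consequently the $\BO_\omega$ hypothesis (which is stated only for $f\in L^r(X)$) cannot be invoked for $f_2$, and the requirement $\|c\ZI_{X\setminus B''}\|_r\le\varepsilon\mu(B)^{1/r}$ is unachievable for any ball $B''$, so the weak-type bound never produces your set $E_4$. One also cannot rescue $f_4$ via the $\BO_\omega$ condition, since small balls $A\supset B$ may still intersect $X\setminus B''$ in an abstract ball-basis, leaving an error of size $\ZL_\omega|c|$, and $|f_{B^*}|$ is not controlled by $\langle f\rangle_{\#,B}^*$. The paper's fix is a different split: using the vanishing property and the exhausting sequence $G_n$ of Lemma 3.2, it chooses one large ball $G\supset B^*$ with $\langle f\rangle^*_{G_n}<\delta$ and writes $f=f\ZI_{X\setminus G}+(f-f_B)\ZI_{B^*}+(f-f_B)\ZI_{G\setminus B^*}+f_B\ZI_{G}$. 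The constant is confined to the bounded set $G$ (handled by vanishing alone), the middle annulus piece is supported in $G$ and hence in $L^r$, and the unbounded tail is the genuine $L^r$ function $f\ZI_{X\setminus G}$, whose contribution is $\le\sup_\alpha\ZL_\omega(T_\alpha)\langle f\rangle^*_{G_n}<\delta\sup_\alpha\ZL_\omega(T_\alpha)$ because $f\in L^r(X)$ forces $\langle f\rangle^*_{G_n}\to 0$.

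The second issue you flag — transferring the hypotheses from the linear family $\{T_\alpha\}$ to $T^{\pm},T^{*}$ — also needs an actual argument rather than an appeal to inheritance. The oscillation bounds do pass to suprema (since $\sup_\alpha g_\alpha(x)-\sup_\alpha g_\alpha(x')\le\sup_\alpha\OSC_B(g_\alpha)$), but your combination step needs more: you must bound $Tf(x)-Tf(x')$ by a sum of contributions from the pieces $f_i$, and for $T=\sup_\alpha T_\alpha$ this does not follow from sublinearity of $T$ alone, because the local piece would then require a weak-type bound for $\sup_\alpha|T_\alpha|$ rather than for $T$ itself. The paper's device is to linearize pointwise: for $x,x'\in\bar E_B$ with $Tf(x)>Tf(x')$ choose a single $\alpha$ nearly attaining the supremum at $x$, so that $Tf(x)-Tf(x')\le 2(T_\alpha f(x)-T_\alpha f(x'))$, and then expand the \emph{linear} operator $T_\alpha$ over the decomposition. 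You should incorporate both of these devices; without them the proof as written does not go through on infinite-measure spaces, which is the main case of interest.
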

{\begin{remark}
	Note that the admissible constant on the right-hand side of \e{y30} is independent of the parameter $\beta$. The same applies to the theorems presented below.
\end{remark}}
\begin{remark}
	Observe that if $\mu(X)<\infty$ and $T(\ZI_X)$ is a constant function, then $T$ is trivially vanishing, since one can simply choose $B'=X$ in \df {D2} (by \lem{L0} the condition $\mu(X)<\infty$ implies that $X$ itself is a ball, i.e. $X\in\ZB$). Thus if $\mu (X)<\infty$, then in \trm{T5} the condition to be uniformly vanishing can be replaced by the condition $T_\alpha(\ZI_X)\equiv c_\alpha$.
\end{remark}
\begin{remark}\label{R1}
	Observe that, in general, operators \e{r50} can produce non-measurable functions, since the number of operators $T_\alpha$ can be non-countable.  So in some cases we will use outer measure $\mu^*$ generated from $\mu$ (see definition in \sect{S1}). In particular, the weak $L^{r,\infty}$ norm of a function $f$ is defined in \e{r54}.
This justifies the consideration of $\|T\|_{L^r(X)\to L^{r,\infty}(X)}$ in \trm{T5}. Besides, the functions, involved in the above notations are allowed to be non-mensurable.  
\end{remark}
\begin{theorem}\label{T1}
	Let $\ZB$ be a doubling ball-basis in a measure space  $X$ and suppose that $\{T_\alpha\}\subset  \BO(X)$ is a family of general $\BO$ operators ($\omega(t)\equiv 1$ in \e{y83}). If $T$ is one of the operators \e{r50}
	and it satisfies conditions \e{y84}, then the bound
	\begin{equation}\label{y44}
		\OSC_{B,\beta}\,T( f)\lesssim \left((1-\beta)^{-1/r}	\|T\|_{L^r(X)\to L^{r,\infty}(X)}+\sup_\alpha \ZL_{\omega}(T_\alpha)\right) \|f\|_\infty
	\end{equation}
	holds for any $f\in L^\infty (X)$, any ball $B\in \ZB$ and a number $0<\beta<1$.
\end{theorem}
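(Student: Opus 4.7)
The plan is to mimic the classical inside/outside decomposition used to show that singular integrals map $L^\infty$ into $\BMO$. Fix a ball $B$, a function $f\in L^\infty(X)$, and $\beta\in(0,1)$; write $f=f_1+f_2$ with $f_1=f\ZI_{B^*}$ and $f_2=f\ZI_{X\setminus B^*}$. Since each $T_\alpha$ is linear, $T_\alpha(f)=T_\alpha(f_1)+T_\alpha(f_2)$. The non-local piece is controlled via the $\BO$ hypothesis: with $\omega\equiv1$ and $\langle f\rangle_A\le\|f\|_\infty$, condition \e{y83} gives
\md0\OSC_B T_\alpha(f_2)\le\ZL_\omega(T_\alpha)\,\|f\|_\infty\le M:=\sup_\alpha\ZL_\omega(T_\alpha)\,\|f\|_\infty\emd
uniformly in $\alpha$. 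The elementary inequality $|\sup_\alpha a_\alpha-\sup_\alpha b_\alpha|\le\sup_\alpha|a_\alpha-b_\alpha|$, together with its analogues for infimum and for supremum of absolute values, then transfers this to $\OSC_B T(f_2)\le M$ for each $T\in\{T^+,T^-,T^*\}$, the measurability subtleties being handled by working with the outer measure $\mu^*$ as in \rem{R1}.

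For the local part I estimate the error $\Phi:=T(f)-T(f_2)$. When $T=T^*$, the reverse triangle inequality immediately yields $|T^*(f)-T^*(f_2)|\le T^*(f_1)$. When $T=T^+$, sublinearity of the supremum gives $T^+(f)-T^+(f_2)\le T^+(f_1)$, while $T_\alpha(f)\ge T^-(f_1)+T_\alpha(f_2)$ implies, upon taking $\sup_\alpha$, that $T^+(f)\ge T^-(f_1)+T^+(f_2)$, and altogether
\md0|T^+(f)-T^+(f_2)|\le\max\bigl(T^+(f_1),\,T^+(-f_1)\bigr),\emd
via the identity $-T^-(g)=T^+(-g)$; the case $T=T^-$ is symmetric. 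Hence in every case $|\Phi|$ is pointwise dominated by one or two operators of the form $T(\pm f_1)$, each controlled by the weak-type assumption \e{y84} since $\|f_1\|_r\le\ZK^{1/r}\|f\|_\infty\,\mu(B)^{1/r}$ by \e{h13}. Consequently
\md0\mu^*\{x\in B:\,|\Phi(x)|>\lambda\}\lesssim\lambda^{-r}\,\|T\|_{L^r\to L^{r,\infty}}^r\,\|f\|_\infty^r\,\mu(B),\emd
and choosing $\lambda\sim(1-\beta)^{-1/r}\,\|T\|_{L^r\to L^{r,\infty}}\,\|f\|_\infty$ makes this set smaller than $(1-\beta)\mu(B)$. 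Its complement $E\subset B$ then has $\mu(E)>\beta\mu(B)$ and $|\Phi|\le\lambda$ on $E$.

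Combining everything, for a.e.\ $x,y\in E$ one gets $|T(f)(x)-T(f)(y)|\le|\Phi(x)|+|\Phi(y)|+|T(f_2)(x)-T(f_2)(y)|\le 2\lambda+M$, so $\OSC_{B,\beta}T(f)\le 2\lambda+M$, which is exactly \e{y44}. The main conceptual hurdle is the asymmetry between $T^+$ and $T^-$: the weak-type hypothesis gives a bound only for the specific $T$ under consideration, so one must exploit the identity $-T^-(g)=T^+(-g)$ to cover both one-sided pieces of $\Phi$ using a single operator norm. The rest is bookkeeping, together with the standard outer-measure workaround for uncountable suprema and the fact that, in contrast to \trm{T5}, the trivial modulus $\omega\equiv 1$ allows the $\BO$ estimate to be absorbed directly into $\|f\|_\infty$ without any uniform-vanishing hypothesis.
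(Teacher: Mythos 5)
Your proof is correct and follows essentially the same route as the paper's: split $f$ at $B^*$, control the tail $f\ZI_{X\setminus B^*}$ by the $\BO$ condition with $\omega\equiv 1$, and use the weak-$L^r$ bound of $T$ to make the local contribution at most $\lambda\sim(1-\beta)^{-1/r}\|T\|_{L^r\to L^{r,\infty}}\|f\|_\infty$ off an exceptional subset of $B$ of outer measure at most $(1-\beta)\mu(B)$. The only difference is bookkeeping for the maximal operators: the paper reduces to $T^+$ and, for each pair of points $x,x'$, picks a near-extremal index $\alpha$ with $|Tf(x)-Tf(x')|\le 2(T_\alpha f(x)-T_\alpha f(x'))$, whereas you sandwich $T(f)-T(f\ZI_{X\setminus B^*})$ between $-T^+(-f\ZI_{B^*})$ and $T^+(f\ZI_{B^*})$ at the operator level --- an equally valid (and arguably more careful) way of handling the two-sided nature of $T^{\pm}$.
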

\begin{corollary}\label{KC2}
	Under the hypothesis of \trm{T5} if $f\in \BMO(X)\cap L^r(X)$, then $T(f)\in \BMO(X)$ and 
	\begin{equation}
		\|T(f)\|_{\BMO(X)}\lesssim \|f\|_{\BMO(X)}.
	\end{equation}
\end{corollary}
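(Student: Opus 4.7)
The plan is to apply Theorem \ref{T5} to derive an oscillation bound for $T(f)$ on each ball $B$, convert it into a weak-type distributional estimate, and integrate via the layer-cake formula to recover the mean oscillation that defines the $\BMO$ seminorm.

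Since $f\in\BMO(X)$, the John--Nirenberg inequality, which holds in the doubling ball-basis setting, gives $\langle f\rangle_{\#,A}\lesssim_r\|f\|_{\BMO(X)}$ for every ball $A$, and hence $\langle f\rangle^*_{\#,B}\lesssim\|f\|_{\BMO(X)}$ after taking the supremum over $A\supset B$. Substituting into \eqref{y30} produces the uniform oscillation bound
\[
\OSC_{B,\beta}\,T(f)\le C(1-\beta)^{-1/r}\|f\|_{\BMO(X)},\qquad \beta\in(0,1),\ B\in\ZB,
\]
with $C$ an admissible constant independent of $B$ and $\beta$.

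To convert this family of bounds into a distributional estimate, I use the equivalent description of $\OSC_{B,\beta}$ as an infimum over intervals (stated right after \eqref{y100}). For each integer $k\ge 1$ I pick a measurable set $E_k\subset B$ with $\mu(B\setminus E_k)\le 2^{-k}\mu(B)$ and an interval $I_k$ of length $\lesssim 2^{k/r}\|f\|_{\BMO(X)}$ such that $T(f)(x)\in I_k$ for almost every $x\in E_k$. For $k\ge 2$ the intersection $E_1\cap E_k$ has positive measure, which forces $I_1\cap I_k\ne\emptyset$; fixing $c\in I_1$ and applying the triangle inequality then gives $|T(f)(x)-c|\lesssim 2^{k/r}\|f\|_{\BMO(X)}$ for almost every $x\in E_k$. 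Consequently,
\[
\mu\bigl\{x\in B:|T(f)(x)-c|>\lambda\bigr\}\lesssim\bigl(\|f\|_{\BMO(X)}/\lambda\bigr)^{r}\mu(B)
\]
for all $\lambda\gtrsim\|f\|_{\BMO(X)}$, while the left-hand side is trivially at most $\mu(B)$ otherwise.

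Assuming $r>1$, integrating via the layer-cake formula yields $\mu(B)^{-1}\int_B|T(f)-c|\,d\mu\lesssim\|f\|_{\BMO(X)}$, and the conclusion follows from the elementary inequality $\mu(B)^{-1}\int_B|g-g_B|\,d\mu\le 2\inf_c\mu(B)^{-1}\int_B|g-c|\,d\mu$ applied to $g=T(f)$. The main obstacle I anticipate is the borderline case $r=1$, where the layer-cake integral diverges logarithmically; this would have to be handled separately, for instance by first interpolating the $L^1\to L^{1,\infty}$ hypothesis with the $L^\infty\to\BMO$ bound of Theorem \ref{T1} to upgrade the weak-type assumption to some $r'>1$ before running the preceding argument.
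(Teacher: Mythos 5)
Your opening step is fine (in fact $\langle f\rangle^*_{\#,B}\le\|f\|_{\BMO(X)}$ holds directly from the definitions, since the paper's $\BMO$ seminorm is already the $L^r$-based one, so no John--Nirenberg input is needed there), and the extraction of sets $E_k$ and overlapping intervals from the bounds $\OSC_{B,1-2^{-k}}T(f)\lesssim 2^{k/r}\|f\|_{\BMO(X)}$ is sound. The gap is in the final conversion. Working on a single ball, this family of oscillation bounds yields only the power-law tail $\mu^*\{x\in B:\,|Tf(x)-c|>\lambda\}\lesssim(\|f\|_{\BMO(X)}/\lambda)^{r}\mu(B)$, and that is too weak for the conclusion: the seminorm $\|\cdot\|_{\BMO}$ in this paper is $\sup_B\langle Tf-(Tf)_B\rangle_B$ with the $L^r$ average, and the layer-cake integral $\int^{\infty}\lambda^{r-1}\cdot\lambda^{-r}\,d\lambda$ diverges for every $r\ge 1$. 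Even the $L^1$ mean oscillation, which is all your computation targets, diverges logarithmically when $r=1$, as you note; and your proposed rescue by interpolating the weak-type hypothesis with the $L^\infty\to\BMO$ bound of \trm{T1} is not a licensed step, since the hypotheses of \trm{T5} provide the weak $(r,r)$ bound only for the one fixed $r$ and no such interpolation result is available here.

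The missing ingredient is \trm{T11}, the abstract John--Nirenberg-type equivalence $\BMO_\alpha(X)=\BMO(X)$ with $\|\cdot\|_{\BMO}\sim\|\cdot\|_{\BMO_\alpha}$ for an admissible $\alpha\in(1/2,1)$. The paper's proof of the corollary is exactly: take $\beta=\alpha$ in \e{y30} to obtain $\|Tf\|_{\BMO_\alpha}\lesssim\|f\|_{\BMO}$, then invoke \trm{T11}. The exponential (rather than power-law) decay that makes this work is produced in \pro{P} by iterating a covering argument over the level sets of $Tf$ using \emph{all} balls together with the regularity of the ball-basis; it cannot be recovered from the $\OSC_{B,\beta}$ bounds on a single fixed ball, which is precisely where your layer-cake computation stalls.
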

\begin{corollary}\label{KC1}
	Under the hypothesis of \trm{T1} if $f\in L^\infty(X)$, then $T(f)\in \BMO(X)$ and 
	\begin{equation}\label{y45}
		\|T(f)\|_{\BMO(X)}\lesssim \|f\|_\infty.
	\end{equation}
\end{corollary}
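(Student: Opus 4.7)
The plan is to upgrade the distributional oscillation bound supplied by \trm{T1} to a mean-oscillation bound through a layer-cake integration. Fix a ball $B \in \ZB$, write $g := T(f)$, and use
\[
\frac{1}{\mu(B)}\int_B |g - g_B|\, d\mu \;\le\; \frac{2}{\mu(B)}\int_B |g - m|\, d\mu,
\]
valid for any constant $m$; I will take $m$ to be a median of $g$ on $B$ (with respect to outer measure, cf.\ \rem{R1}) and control the right-hand side through the distribution function of $|g - m|$ on $B$.

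First, for each $\beta \in (1/2, 1)$, \trm{T1} yields
\[
\OSC_{B,\beta}(g) \le C_\beta \|f\|_\infty, \qquad C_\beta := (1-\beta)^{-1/r}\|T\|_{L^r \to L^{r,\infty}} + \sup_\alpha \ZL_{\omega}(T_\alpha),
\]
and by the very definition of $\OSC_{B,\beta}$ there exist (up to arbitrarily small errors) an interval $[a_\beta, b_\beta]$ of length at most $C_\beta \|f\|_\infty$ and a set $E_\beta \subset B$ of measure $>\beta \mu(B)$ on which $g\in[a_\beta, b_\beta]$. Since $\mu(E_\beta)>\mu(B)/2$ and each of the sets $\{g\le m\}$, $\{g\ge m\}$ has measure at least $\mu(B)/2$, both meet $E_\beta$, which forces $a_\beta \le m \le b_\beta$ and hence $|g-m|\le C_\beta \|f\|_\infty$ on $E_\beta$. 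Thus
\[
\mu^*\{x\in B : |g(x) - m| > C_\beta\|f\|_\infty\} \le (1-\beta)\mu(B), \qquad \beta \in (1/2, 1).
\]

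Next I would invert the relation $t = C_\beta\|f\|_\infty$ to obtain, for $t \ge t_0 := C_{1/2}\|f\|_\infty$,
\[
\mu^*\{x\in B : |g(x)-m| > t\} \le \mu(B)\left(\frac{t - A\|f\|_\infty}{D\|f\|_\infty}\right)^{-r},
\]
where $A := \sup_\alpha \ZL_{\omega}(T_\alpha)$ and $D := \|T\|_{L^r \to L^{r,\infty}}$, and apply the layer-cake formula. Splitting the integral of the distribution function at $t = t_0$, the low range contributes at most $t_0\mu(B) \lesssim \|f\|_\infty\mu(B)$, while the high range reduces to a tail integral of $(t-A\|f\|_\infty)^{-r}$, convergent for $r>1$ and evaluating to a bounded multiple of $\|f\|_\infty\mu(B)$. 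Combining yields $\frac{1}{\mu(B)}\int_B |g-m|\,d\mu \lesssim \|f\|_\infty$ and hence \e{y45}.

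The main obstacle is the need for a single centering constant $m$ that works uniformly in $\beta$, since the intervals $[a_\beta, b_\beta]$ produced by $\OSC_{B,\beta}$ for different $\beta$ need not nest. Choosing $m$ to be a median of $g$ on $B$ is precisely what allows the conclusion $m\in[a_\beta, b_\beta]$ for all $\beta>1/2$, thereby converting the family of oscillation bounds into a single distributional estimate amenable to layer-cake. A secondary subtlety is the convergence of the tail integral, which holds exactly when $r>1$; for $r=1$ the same scheme produces only a logarithmic divergence, so one would have to invoke additional regularity (such as the sparse-domination machinery available for $\BO$ operators) to first reduce to the $r>1$ case.
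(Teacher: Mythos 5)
Your reduction of the oscillation bounds to a distributional estimate is fine as far as it goes (the median trick correctly centers all the intervals $[a_\beta,b_\beta]$ at once), but the resulting decay is too weak to close the argument, and not only in the case $r=1$ that you flag. Inverting $t=C_\beta\|f\|_\infty$ gives
\begin{equation*}
\mu^*\{x\in B:\, |g(x)-m|>t\}\le \mu(B)\left(\frac{D\|f\|_\infty}{t-A\|f\|_\infty}\right)^{r},
\end{equation*}
i.e.\ power decay of order exactly $r$. In this paper $\|\cdot\|_{\BMO}$ is defined with the $L^r$ average, $\|g\|_\BMO=\sup_B\langle g-g_B\rangle_B=\sup_B\big(\tfrac1{\mu(B)}\int_B|g-g_B|^r\big)^{1/r}$, so the layer-cake integral you need is $\int r\,t^{r-1}\lambda(t)\,dt$, whose tail behaves like $\int t^{-1}dt$ and diverges for every $r\ge 1$. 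Varying $\beta$ therefore buys you only $L^p$ mean oscillation for $p<r$ (in particular the $L^1$ oscillation when $r>1$, and nothing when $r=1$); it never reaches the critical exponent $r$ that the statement requires. The missing ingredient is a self-improvement of the distributional estimate from power decay to exponential decay, which in this abstract setting is not a routine John--Nirenberg citation: it is \trm{T11}, proved via \pro{P} using the regularity and density properties of the ball-basis.

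The paper's intended proof is correspondingly different and shorter: apply \trm{T1} with the single value $\beta=\alpha$, where $\alpha\in(1/2,1)$ is the admissible constant from \trm{T11}, to get $\|T(f)\|_{\BMO_\alpha}=\sup_B\OSC_{B,\alpha}(T(f))\lesssim\|f\|_\infty$, and then invoke the equivalence $\|\cdot\|_{\BMO}\sim\|\cdot\|_{\BMO_\alpha}$ of \trm{T11}. You should replace the layer-cake step by this appeal to \trm{T11} (or reproduce the iteration of \pro{P}, which converts the single bound $\mu^*\{|g-a_B|>\|g\|_{\BMO_\alpha}\}\le(1-\alpha)\mu(B)$ into geometric decay $\lambda(n\|g\|_{\BMO_\alpha})\le 2^{-(n-1)}\mu(B)$); with exponential decay in hand, the layer-cake computation does give the $L^r$ oscillation for every $1\le r<\infty$.
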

\cor{KC2} is immediately obtained by a combination of \trm{T5} and the following John-Nirenberg type \trm{T11}. For its statement,  first, recall the definitions of $\BMO_\alpha$ and $\BMO$ spaces as families of functions $f\in L^0(X)$, satisfying
\begin{align}
	&\|f\|_{\BMO_\alpha}=\sup_{B\in \ZB}\OSC_{B,\alpha}(f)<\infty,\\
	&\|f\|_{\BMO}=\sup_{B\in \ZB}\langle f\rangle_{\#,B}=\sup_{B\in \ZB}\langle f-f_B\rangle_{B}<\infty,\\
\end{align}
respectively. One can easily check that
\begin{align*}
	\|f\|_{\BMO_\alpha}\le 2(1-\alpha)^{-1}\|f\|_\BMO,
\end{align*}
whereas the following is a more delicate property of these norms.
\begin{theorem}\label{T11}
	There is an admissible constant $\alpha\in (1/2,1)$ such that  $ \BMO_\alpha(X)= \BMO(X)$. Moreover, for any $f\in \BMO_\alpha(X)$ we have $\|f\|_{\BMO}\sim	\|f\|_{\BMO_\alpha}$.
\end{theorem}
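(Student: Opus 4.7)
The inequality $\|f\|_{\BMO_\alpha}\le 2(1-\alpha)^{-1}\|f\|_\BMO$ is noted just before the theorem; the nontrivial content is the reverse bound $\|f\|_\BMO\lesssim\|f\|_{\BMO_\alpha}$ for an admissible $\alpha$ sufficiently close to $1$. The plan is a John--Nirenberg style argument: show that for each ball $B\in\ZB$ there is a constant $c_B$ for which the distribution of $|f-c_B|$ on $B$ decays exponentially at rate $M:=\|f\|_{\BMO_\alpha}$, and then integrate.

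Fix a ball $B$. By the second characterization stated just after \eqref{y100}, one can select $a\le b$ with $b-a\le M$ such that $\mu\{x\in B:f(x)\in[a,b]\}>\alpha\mu(B)$. Put $c_B:=(a+b)/2$; then the bad set $E_0:=\{x\in B:|f(x)-c_B|>M/2\}$ satisfies $\mu(E_0)\le (1-\alpha)\mu(B)$. The heart of the argument is a Calder\'on--Zygmund selection applied to $E_0$: produce a disjoint family of sub-balls $\{B_j\}\subset\ZB$ covering $E_0$ up to a null set, with $\sum_j \mu(B_j)\le C(1-\alpha)\mu(B)$ and with the corresponding centering constants satisfying $|c_{B_j}-c_B|\le C'M$, for admissible $C,C'$. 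In the doubling ball-basis setting such a selection should be produced by a Vitali/stopping-time argument based on axioms B3, B4 together with the doubling condition \eqref{h73}; passing from candidate balls to their hull-balls $B_j^*$ and applying the $\alpha$-oscillation hypothesis on these hull-balls is what yields the comparability $|c_{B_j}-c_B|\le C'M$.

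Iterating Step $1$ inside each $B_j$ with centering constant $c_{B_j}$ produces, after $n$ steps, a family $\{B_{j_1,\dots,j_n}\}$ of pairwise disjoint sub-balls whose union contains $\{x\in B:|f(x)-c_B|>nC'M\}$ up to measure zero, and whose total measure is at most $[C(1-\alpha)]^n\mu(B)$. Choose now the admissible constant $\alpha\in(1/2,1)$ so that $\eta:=C(1-\alpha)\le 1/2$. A layer-cake computation then gives
\[
\frac{1}{\mu(B)}\int_B|f-c_B|^r \lesssim M^r,
\]
so that $\langle f-c_B\rangle_B\lesssim M$ and, since $|f_B-c_B|\le \langle f-c_B\rangle_B$, also $\langle f-f_B\rangle_B\lesssim M$. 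Taking the supremum over $B\in\ZB$ yields $\|f\|_\BMO\lesssim \|f\|_{\BMO_\alpha}$.

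The decisive step, and the main obstacle, is the Calder\'on--Zygmund covering of $E_0$ in an abstract ball-basis lacking any dyadic scaffolding. Producing disjoint selected balls $B_j$ with simultaneously controlled total measure \emph{and} $|c_{B_j}-c_B|\lesssim M$ requires a careful interplay between axiom B4 (to replace overlapping candidate balls by their hull-balls) and the doubling inequality \eqref{h73} (to keep the stopping procedure moving through comparably-sized enlargements). I would invoke a suitable Vitali/selection lemma from \cite{Kar3} rather than reprove it from scratch, applying it to the super-level structure of $f$ on $B$; the remaining iteration and layer-cake steps are routine.
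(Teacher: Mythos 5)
Your overall strategy coincides with the paper's: center $f$ on $B$ using the $\alpha$-oscillation characterization, cover the bad set by a controlled disjoint family of balls, iterate to get exponential decay of the distribution function, and finish with a layer-cake integration. The paper packages the iteration as a single good-$\lambda$ step (\pro{P}, applied at levels $\lambda=(k-1)\|f\|_{\BMO_\alpha}$) rather than a nested Calder\'on--Zygmund tree, but that is a cosmetic difference. The problem is that the step you yourself identify as decisive --- producing disjoint balls $B_j$ with $\sum_j\mu(B_j)\lesssim(1-\alpha)\mu(B)$ \emph{and} $|c_{B_j}-c_B|\lesssim M$ --- is exactly where the real work lies, and it is left as a black box. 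The Vitali lemma you propose to import (\lem{L1-1} here) only converts an arbitrary cover into a disjoint family whose hull-balls still cover; it gives neither the measure control nor the comparability of centering constants. The measure control requires a genuine stopping time: the paper runs, from each point of the bad set, the increasing ball chain of \lem{L5} (anchored by the density property, \lem{L12}) and stops at the last scale where the bad set still occupies a definite fraction of the ball; the lower bound at the stopping scale is what yields $\mu(B_j)\lesssim\mu(B_j\cap E_0)$ and hence the summability.

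More seriously, the bound $|c_{B_j}-c_B|\lesssim M$ does not follow from ``applying the $\alpha$-oscillation hypothesis on the hull-balls'' alone. Since $\OSC_{B_j^*,\alpha}$ only controls $f$ on \emph{some} subset $E_j\subset B_j^*$ of measure $\ge\alpha\mu(B_j^*)$, you must guarantee that $E_j$ meets the set where $|f-c_B|\le\lambda$, i.e.\ that $B_j^*$ is not mostly swallowed by the bad set and not mostly outside $B$. This is precisely where the \emph{regularity} condition of the ball-basis (\df{D3}) enters: it gives $\mu(B_j^*\cap B)\ge\theta\mu(B_j^*)$, which combined with the upper stopping bound $\mu(B_j^*\cap E_0)<\tfrac{\alpha\theta}{2}\mu(B_j^*)$ forces $E_j\cap B$ to contain a good point $x_j$ with $|f(x_j)-c_B|\le\lambda$, whence $|f-c_B|\le\lambda+M$ on $E_j$. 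The paper explicitly remarks that regularity is not needed for \trm{T5} but is essential for \trm{T11}; your proposal never invokes it, and without it the comparability of the centering constants (equivalently, the one-step increment by $M$ per generation) fails. Until that covering lemma is actually proved --- stopping time, regularity, and the careful bookkeeping with outer measure for possibly non-measurable level sets --- the proof is incomplete at its central point.
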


\begin{remark}
	\trm{T11} is an extension of the analogous result on $\ZR^d$ proved in \cite{John, Str}. The proof of \trm{T11} is based on \pro{P} (see \sect{S5}), which is itself interesting. In particular, from \pro{P} it follows that:
	if $f\in \BMO(\ZR^d)$, then for any ball $B\subset \ZR^d$,
	\begin{align}
		|\{x\in B:\, |f(x)-f_B|>&c(n+1)\|f\|_\BMO\}|\\
		&\le \frac{1}{2}\cdot |\{x\in B:\, |f(x)-f_B|>cn\|f\|_\BMO\}|\label{x57}
	\end{align}
	where $c>0$ is an absolute constant. Clearly, this implies the classical John-Nirenberg inequality \e{JN},  but the converse statement is not true at all.
\end{remark}
\begin{remark}
	Note that for the proof of \trm{T5} the regularity condition (see \df {D3}) of the ball-basis is not used, but it is significant in the proofs of \trm{T11} and  \cor{KC2}. 
\end{remark}
The author is grateful to the anonymous referee for valuable comments and suggestions that helped to improve the quality of the paper.

\section{Outer measure and $L^p$-norms of non-measurable functions}\label{S1}
It is well known for classical examples of ball-bases that the union of any collections of balls is measurable and such a property is important to ensure measurability of certain maximal operators. For general ball-bases non-countable union of balls need not to be measurable. So in some cases we will need to use outer measure generated by the basic measure $\mu$. Hence, let $(X,\ZM,\mu)$ be a measure space. Define the outer measure of an arbitrary set $E\subset X$ by
\begin{equation*}
	\mu^*(E)=\inf_{F\in \ZM:\, F\supset E}\mu(F).
\end{equation*}
{Clearly, for any set $F\subset X$ there exists a measurable cover for $F$, i.e. a set $E\in \ZM$ such that 
\begin{equation}\label{x1}
	E\supset F,\quad \mu^*(F)=\mu(E). 
\end{equation}
In general, a measurable cover is not uniquely determined. However, if $E$ and $E'$ are two measurable covers for a set $F$, then those coincide a.e., i.e. $\mu(E\triangle E')=0$. For any function $f:X\to \ZR$ (possibly non-measurable) we denote
\begin{align}
	&G_f(t)=\{x\in X:\, |f(x)|>t\},\quad t\ge 0,\label{x3}\\
	& \lambda_f(t)=\mu^*\left(G_f(t)\right).
\end{align}
Clearly $\lambda_f(t)$ is a distribution function, i.e. it is right-continuous. 
\begin{proposition}\label{P1}
	For any function $f:X\to \ZR$ (possibly non-measurable) there exists a measurable function $\bar f$, called a cover function for $f$, such that
	\begin{align}
		&|f(x)|\le \bar f(x),\quad x\in X,\label{x4}\\
		&\lambda_f(t)=\lambda_{\bar f}(t),\quad t\in \ZR.\label{x6}
	\end{align}
The function $\bar f$ is uniquely defined a.e.; that is, if $g$ is another cover function for $f$, then $g(x)=\bar f(x)$ a.e..
\end{proposition}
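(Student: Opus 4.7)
The plan is to construct $\bar f$ via its super-level sets: build a monotone family of measurable covers of $G_f(t)$ indexed by rational $t$, and then take the pointwise supremum of the rationals whose cover contains $x$.

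First I would fix an enumeration of $\mathbb{Q}_{\ge 0}$ and, for each rational $s \ge 0$, pick an arbitrary measurable cover $E_s \supset G_f(s)$ with $\mu(E_s)=\lambda_f(s)$. These covers need not be monotone in $s$, but since $G_f(\cdot)$ is decreasing in its argument, setting
\begin{equation*}
    \tilde E_t \;=\; \bigcap_{s\in\mathbb{Q}_{\ge 0},\, s\le t} E_s
\end{equation*}
yields, for each rational $t$, a measurable set with $\tilde E_t \supset G_f(t)$ and $\tilde E_t \subset E_t$, hence $\mu(\tilde E_t)=\lambda_f(t)$; moreover $s\le t$ implies $\tilde E_s \supset \tilde E_t$. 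Then I would define
\begin{equation*}
    \bar f(x) \;=\; \sup\{\,t\in\mathbb{Q}_{\ge 0}:\, x\in \tilde E_t\,\}
\end{equation*}
(with $\sup\varnothing = 0$). Measurability of $\bar f$ is immediate from $\{\bar f>t\}=\bigcup_{s\in\mathbb{Q},\,s>t}\tilde E_s$, and \eqref{x4} follows because any rational $s<|f(x)|$ satisfies $x\in G_f(s)\subset\tilde E_s$, so $\bar f(x)\ge s$ and then $\bar f(x)\ge |f(x)|$.

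For \eqref{x6}, I would exploit the monotonicity of $\tilde E_\cdot$: pick a decreasing sequence $s_n\in\mathbb{Q}$ with $s_n\downarrow t^+$; then the sets $\tilde E_{s_n}$ are increasing and their union equals $\bigcup_{s\in\mathbb{Q},\,s>t}\tilde E_s=\{\bar f>t\}$, so continuity of $\mu$ from below gives
\begin{equation*}
    \lambda_{\bar f}(t)=\lim_n \mu(\tilde E_{s_n})=\lim_n \lambda_f(s_n)=\lambda_f(t),
\end{equation*}
using right-continuity of $\lambda_f$. (For $t<0$ both sides equal $\mu(X)$.)

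For uniqueness, suppose $g$ is another cover function for $f$. Then for each $t$ the set $\{g>t\}$ is measurable, contains $G_f(t)$, and has $\mu(\{g>t\})=\lambda_g(t)=\lambda_f(t)=\mu^*(G_f(t))$, so $\{g>t\}$ is a measurable cover of $G_f(t)$; the same is true of $\{\bar f>t\}$. By the uniqueness-up-to-null-sets of measurable covers already noted in the excerpt, $\{g>t\}\triangle\{\bar f>t\}$ has measure zero for every $t$. Taking $t$ over $\mathbb{Q}_{\ge 0}$ and using countable subadditivity gives a single null set $N$ outside of which $\{g>t\}=\{\bar f>t\}$ for all rational $t\ge 0$, and then $g(x)=\sup\{t\in\mathbb{Q}_{\ge 0}:g(x)>t\}=\sup\{t\in\mathbb{Q}_{\ge 0}:\bar f(x)>t\}=\bar f(x)$ off $N$.

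The one spot that needs care is the monotone-cover construction: a naive choice of measurable covers $E_s$ is not monotone in $s$, and without monotonicity the union $\bigcup_{s>t}E_s$ could easily exceed $\lambda_f(t)$, so the intersection trick (and the fact that intersecting with supersets of $G_f(t)$ cannot drop the measure below $\lambda_f(t)$) is essential for the right-continuity argument in \eqref{x6} to work.
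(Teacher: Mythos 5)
Your proposal is correct and follows essentially the same route as the paper: measurable covers of the super-level sets $G_f(s)$ for rational $s$, made monotone (your intersection $\tilde E_t=\bigcap_{s\le t}E_s$ is exactly the ``suitable choice'' the paper alludes to), followed by $\bar f(x)=\sup\{t\in\mathbb{Q}_{\ge 0}:x\in\tilde E_t\}$. Your verifications of \eqref{x4}, \eqref{x6} (via continuity from below and the right-continuity of $\lambda_f$ already asserted in the text) and of a.e.\ uniqueness are just more detailed versions of the steps the paper leaves to the reader.
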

\begin{proof}
	Let $\overline G_f(t)$ be measurable covers for the sets \e{x3}. Making suitable choice of those cover sets and using the countability of the rationals $\mathbb Q$, one can ensure $\overline G_f(r)\supset \overline G_f(r')$ for all pairs of rationals $r<r'$. Set
	\begin{equation}
		\bar f(x)=\sup\left\{r\in \mathbb Q:\, x\in \overline G_f(r)\right\}.
	\end{equation}
	One can check that
	\begin{equation*}
		\left\{x\in X:\, f(x)>t\right\}\subset \bigcup_{r>t,\, r\in \mathbb Q}\overline G_f(r)=\left\{x\in X:\, \bar f(x)>t\right\}.
	\end{equation*}
	These relations easily imply that $\bar f$ is measurable and satisfies \e{x4}, \e{x6}. It is also easy to observe that $\bar f$ is uniquely defined a.e..
\end{proof}
\begin{remark}
	Form \pro {P1} it follows that the cover function of any function $f$ is the least measurable function dominating $f$. Namely, if $g(x)$ is measurable and $f(x)\le g(x)$ then $g(x)\ge \bar f(x)$ a.e.. This in particular implies that
	\begin{equation}\label{x54}
		\overline {f+g}\le \bar f+\bar g,\quad \overline{fg}\le \bar f\cdot \bar g.
	\end{equation}
\end{remark}
For arbitrary function $f$ on $X$ we define
\begin{align}
	&\|f\|_{L^p}=\|\bar f\|_{L^p}=\left(p\int_0^\infty t^{p-1}\lambda_f(t)dt\right)^{1/p},\label{r52}\\
	&\|f\|_{L^{p,\infty}}=\|\bar f\|_{L^{p,\infty}}=\sup_{t>0}t(\lambda_f(t))^{1/p}.\label{r54}
\end{align}
Using \pro {P1}, one can observe that the standard triangle and  H\"{o}lder inequalities hold in such setting of $L^p$ norms. In particular, if $f$ and $g$ are arbitrary functions, then, using \e{x54}, we can write
\begin{equation*}
	\|f+g\|_p=	\|\overline{f+g}\|_p\le \|\bar f+\bar g\|_p\le \|\bar f\|_p+\|\bar g\|_p=\|f\|_p+\|g\|_p
\end{equation*}
that gives the triangle inequality for general functions. Similarly it can be proved the  H\"{o}lder's inequality. As we have mentioned in \rem{R1} some maximal operators can produce non-measurable functions. One can state a version of that Marcinkiewicz interpolation theorem for such operators. Denote by $L^{0,*}(X)$ the space of arbitrary functions $f:X\to \ZR$.  
\begin{definition}
	We say a subadditive operator $T:L^p(X)\to L^{0,*}(X)$ satisfies weak-$L^p$ or strong-$L^p$ estimate if
	\begin{align*}
		&\|T\|_{L^p\to L^{p,\infty}}=\sup_{t>0,\,f\in L^p(X)}\frac{t\cdot ( \lambda_{T(f)}(t))^{1/p}}{\|f\|_{L^p}}<\infty,\\
		&\|T\|_{L^p}=\sup_{f\in L^p(X)}\frac{\|T(f)\|_{L^p}}{\|f\|_{L^p}}<\infty,
	\end{align*}
	respectively. 
\end{definition}
The proof of the following generalized Marcinkiewicz interpolation theorem is exactly the same as in the classical case (\cite {Zyg}, ch. 12.4); one only needs to apply \pro{P1} at certain steps of the proof.
\begin{OldTheorem}[Marcinkiewicz interpolation theorem]\label{M} If a subadditive  operator $T$ satisfies the weak-$L^{p_1}$ and the weak-$L^{p_2}$ estimates ($1\le p_1<p_2\le \infty$), then the operator $T$ satisfies the strong-$L^p$ bound for any $p$, satisfying $p_1<p<p_2$, .
\end{OldTheorem}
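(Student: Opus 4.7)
The plan is to follow the classical truncation-and-integration argument for Marcinkiewicz in the outer-measure setting, invoking \pro{P1} wherever measurability of $T(f)$ or its truncations is required. Fix $f\in L^p(X)$ with $p_1<p<p_2$ and $\lambda>0$. Let $\bar f$ be the cover function of $f$ given by \pro{P1}, and decompose $f=g_\lambda+h_\lambda$, with $g_\lambda=f\cdot \ZI_{\{\bar f>c\lambda\}}$ and $h_\lambda=f\cdot \ZI_{\{\bar f\le c\lambda\}}$, where $c$ is a suitable threshold constant (taken as $1$ if $p_2<\infty$, and $c=1/(2\|T\|_{L^\infty\to L^\infty})$ if $p_2=\infty$). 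Standard calculations give $\|g_\lambda\|_{p_1}^{p_1}=\int_{\{\bar f>c\lambda\}}\bar f^{p_1}\,d\mu$, and, when $p_2<\infty$, $\|h_\lambda\|_{p_2}^{p_2}\le (c\lambda)^{p_2-p}\int\bar f^p\,d\mu$; if $p_2=\infty$ the choice of $c$ forces $\|T(h_\lambda)\|_\infty\le\lambda/2$ and removes the second piece below.

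Subadditivity of $T$ yields the pointwise bound $|T(f)|\le |T(g_\lambda)|+|T(h_\lambda)|$, and hence the superlevel-set inclusion
\begin{equation*}
G_{T(f)}(\lambda)\subset G_{T(g_\lambda)}(\lambda/2)\cup G_{T(h_\lambda)}(\lambda/2).
\end{equation*}
Countable subadditivity of the outer measure $\mu^*$, together with the weak-$L^{p_1}$ and weak-$L^{p_2}$ hypotheses interpreted through cover functions as in \e{r54}, produces
\begin{equation*}
\lambda_{T(f)}(\lambda)\le \frac{(2\|T\|_{L^{p_1}\to L^{p_1,\infty}})^{p_1}}{\lambda^{p_1}}\|g_\lambda\|_{p_1}^{p_1}+\frac{(2\|T\|_{L^{p_2}\to L^{p_2,\infty}})^{p_2}}{\lambda^{p_2}}\|h_\lambda\|_{p_2}^{p_2},
\end{equation*}
with the second term absent in the $p_2=\infty$ case.

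Multiplying through by $p\lambda^{p-1}$, integrating over $\lambda\in(0,\infty)$, and interchanging the order of integration via Fubini on the measurable integrand built from $\bar f$ yields the strong bound $\|T(f)\|_p^p\lesssim \|f\|_p^p$ with the usual Marcinkiewicz constant depending on $p_1,p_2,p$ and the two weak-type norms. No step of the classical argument (\cite{Zyg}, ch.~12.4) needs structural modification; \pro{P1} enters only (i) to define $L^p$ and $L^{p,\infty}$ norms of non-measurable outputs via cover functions and (ii) to legitimize taking outer measures of superlevel sets, using only subadditivity $\mu^*(A\cup B)\le \mu^*(A)+\mu^*(B)$. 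The main obstacle is therefore just careful bookkeeping of measurability and consistent use of $\mu^*$; no new analytic ingredient is required beyond the classical proof.
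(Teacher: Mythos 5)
Your proposal is correct and is precisely what the paper intends: the paper does not write out a proof but states that the argument is "exactly the same as in the classical case (\cite{Zyg}, ch.~12.4); one only needs to apply \pro{P1} at certain steps," and your write-up carries out exactly that classical truncation--integration scheme, invoking \pro{P1} and the subadditivity of $\mu^*$ only to handle the possibly non-measurable output $T(f)$ through its distribution function. (Note that the input $f$ is already measurable, so the cover function is really only needed on the output side, and finite rather than countable subadditivity of $\mu^*$ suffices; neither point affects the validity of your argument.)
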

}
We will use below weak-$L^1$ bound of the standard maximal function
\begin{equation}\label{1-1}
	\MM f(x)=\sup_{B\in \ZB:\, x\in B}\langle f\rangle_B=\sup_{B\in \ZB:\, x\in B} \frac{1}{\mu(B)}\int_B|f|
\end{equation}
defined on a measure space with a ball-basis $\ZB$.
\begin{OldTheorem}[\cite{Kar3}, Theorem 4.1]\label{T1-1}
	The maximal function \e {1-1} satisfies weak-$L^1$ inequality. Namely, 
	\begin{equation}\label{h38}
		\mu^*\{x\in X:\,\MM f(x)>\lambda\}|\lesssim \|f\|_{L^1(X)}/\lambda,\quad \lambda>0.
	\end{equation} 
\end{OldTheorem}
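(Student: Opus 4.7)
The plan is to carry out the classical Vitali-type argument, with property B4 playing the role of the $3B$-covering lemma on $\ZR^n$. First I would note that for each $x$ with $\MM f(x)>\lambda$ there is a ball $B_x\ni x$ with $\mu(B_x)^{-1}\int_{B_x}|f|>\lambda$. Since this inequality is an attribute of the ball itself, it shows $B_x\subset E:=\{y:\MM f(y)>\lambda\}$. Hence $E$ is a union of such balls, and any such $B_x$ satisfies $\mu(B_x)<\|f\|_{L^1(X)}/\lambda$, giving a uniform measure bound over the entire family.

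Next I would extract a disjoint subfamily by greedy selection: pick $B_1$ from $\{B_x\}$ with $\mu(B_1)\ge\tfrac12\sup_x\mu(B_x)$, and having chosen pairwise disjoint $B_1,\ldots,B_k$, pick $B_{k+1}$ among the balls of the family disjoint from $B_1\cup\cdots\cup B_k$ with measure at least half of that sub-supremum. The halving accommodates non-attained suprema. The inequality $\lambda\mu(B_k)<\int_{B_k}|f|$ combined with disjointness yields $\lambda\sum_k\mu(B_k)\le\|f\|_{L^1(X)}$, so in the infinite case $\mu(B_k)\to 0$.

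The central combinatorial claim is $E\subset\bigcup_k B_k^*$. Any $B_x$ must meet some selected $B_k$: were it disjoint from every $B_k$, it would remain a candidate at every step, forcing $\mu(B_x)\le 2\mu(B_k)$ for all $k$, which contradicts $\mu(B_k)\to 0$ together with $\mu(B_x)>0$ (from B1). Taking $k_0$ to be the smallest index with $B_x\cap B_{k_0}\ne\varnothing$, the ball $B_x$ was still available at step $k_0$, so $\mu(B_x)\le 2\mu(B_{k_0})$, and B4 then yields $B_x\subset B_{k_0}^*$. Using the hull-ball bound \e{h13} and the disjointness of $\{B_k\}$,
\begin{equation*}
\mu^*(E)\le\sum_k\mu(B_k^*)\le\ZK\sum_k\mu(B_k)\le\frac{\ZK}{\lambda}\sum_k\int_{B_k}|f|\le\frac{\ZK}{\lambda}\|f\|_{L^1(X)}.
\end{equation*}

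The principal obstacle is the covering step, namely verifying that greedy selection exhausts the family up to hull-balls; this rests entirely on the uniform bound $\mu(B_x)<\|f\|_{L^1(X)}/\lambda$ and property B4. A secondary technicality is that $E$ itself need not be measurable (the sup in $\MM f$ can be taken over an uncountable family of balls), which is exactly why the statement bounds $\mu^*(E)$; the countable cover $E\subset\bigcup_k B_k^*$ by measurable hull-balls then makes the outer-measure subadditivity step legitimate.
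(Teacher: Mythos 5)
Your argument is correct and is essentially the paper's approach: the theorem is stated here without proof (imported from \cite{Kar3}), but the paper's own \lem{L1-1} is exactly your greedy selection, and the proof in \cite{Kar3} runs this same Vitali-type covering argument on the family $\{B_x\}$ of balls witnessing $\langle f\rangle_{B_x}>\lambda$. The one place you rightly adapt \lem{L1-1}, which assumes the covered set is bounded, is in substituting the uniform bound $\mu(B_x)<\|f\|_{L^1(X)}/\lambda$ for boundedness — it makes the initial supremum finite and forces $\mu(B_k)\to 0$, which is all the exhaustion step actually needs.
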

\section{Preliminary properties of ball bases}\label{S2}
Some of lemmas proved in this section are versions of similar statements from \cite{Kar3, Kar1}. We find necessary to provide their proofs though.  Let $\ZB$ be a ball-basis in the measure space $(X,\mu)$. From B4) condition it follows that if balls $A,B$ satisfy $A\cap B\neq\varnothing$, $\mu(A)\le 2\mu(B)$, then $A\subset B^*$. This property will be called two balls relation. We say a set $E\subset X$ is bounded if $E\subset B$ for a ball $B\in\ZB$.
\begin{lemma}\label{L0}
	If $(X,\mu)$ is a measure space equipped with a ball-basis $\ZB$ and $\mu(X)<\infty$, then $X\in \ZB$.
\end{lemma}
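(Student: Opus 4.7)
The plan is to produce a single large ball whose hull-ball already contains every point of $X$.

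Let $s = \sup_{B\in \ZB}\mu(B)$. First I would observe that every ball is a measurable subset of $X$, so $\mu(B)\le \mu(X)<\infty$ for each $B\in \ZB$, which gives $s\le \mu(X)<\infty$. By B2 (applied to any two points of $X$) at least one ball exists, and by B1 every ball has positive measure, so $s>0$. Hence I can select a ball $B\in \ZB$ with $\mu(B)>s/2$. Fix any point $b\in B$; such a point exists because $\mu(B)>0$ implies $B\neq\varnothing$.

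Next I would invoke the two balls relation coming from B4 and \eqref{h12}. For an arbitrary point $x\in X$, axiom B2 furnishes a ball $A\in \ZB$ with $\{x,b\}\subset A$; then $b\in A\cap B$, so $A\cap B\ne\varnothing$, and $\mu(A)\le s<2\mu(B)$. Property \eqref{h12} therefore forces $A\subset B^{*}$, and in particular $x\in B^{*}$. Since $x\in X$ was arbitrary, this shows $X\subset B^{*}$; the reverse inclusion is automatic because $B^{*}\in \ZM$. Consequently $B^{*}=X$, and since $B^{*}\in \ZB$ by B4, we conclude $X\in \ZB$.

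There is no real obstacle here beyond making sure $s$ is finite (which is exactly the hypothesis $\mu(X)<\infty$) and that the strict inequality $\mu(B)>s/2$ can be arranged; the latter is possible because $s>0$, so picking any ball with measure in $(s/2,s]$ works. The proof uses only B1, B2, and the inclusion part \eqref{h12} of B4; the bound \eqref{h13} and the doubling/regularity refinements play no role.
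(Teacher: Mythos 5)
Your proof is correct and follows essentially the same route as the paper's: pick a ball of measure within a factor of two of the supremum (finite since $\mu(X)<\infty$) and use B2 together with the two-balls relation \eqref{h12} to show its hull-ball swallows all of $X$. The only cosmetic difference is that the paper restricts the supremum to balls containing a fixed point $x_0$, while you take it over all balls and then anchor at a point of the chosen ball; both work identically.
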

\begin{proof}
	Fix an arbitrary point $x_0\in X$ and let $\ZA\subset \ZB$ be the family of balls containing $x_0$. By B2) condition we can write $X=\cup_{A\in \ZA}A$. Then there exists a $B\in \ZA$ such that $\mu(B)>\frac{1}{2} \sup_{A\in \ZA}\mu(A)$ and using B4) condition we obtain
	\begin{equation*}
		X=\cup_{A\in \ZA}A\subset B^*.
	\end{equation*}
	Thus $X=B^*\in \ZB$.
\end{proof}
\begin{lemma}\label{L5}
	Let $(X,\mu)$ be a measure space equipped with a ball-basis $\ZB$ and $G\in \ZB$. Then there exists a sequence of balls $G=G_1,G_2, \ldots$ (finite or infinite) such that 
	\begin{equation}
		X=\cup_kG_k,\quad G_k^*\subset G_{k+1}. 
	\end{equation}
	Moreover for any $B\in \ZB$ there is a ball $G_n\supset B$.  If the ball-basis is doubling, then we can additionally claim 
	\begin{equation}\label{x71}
		2\mu(G_k)\le \mu(G_{k+1})\le \gamma\mu(G_k), 
	\end{equation}
	with an admissible constant $\gamma>2$. 
\end{lemma}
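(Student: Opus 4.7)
The plan is to build the chain by iterating the hull operator: in the doubling case the property \eqref{h73} forces geometric growth of $\mu(G_k)$, and absorption of an arbitrary ball will follow from B4) via the \emph{two-balls relation}: whenever $A\cap B\neq\varnothing$ and $\mu(A)\le 2\mu(B)$, one has $A\subset B^*$.

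Set $G_1=G$ and proceed by induction. Given $G_k$, form the hull $G_k^*$ via B4). In the doubling case, provided $(G_k^*)^*\neq X$, apply \df{DD} to $G_k^*$ to choose $G_{k+1}\supset G_k^*$ with $2\mu(G_k^*)\le\mu(G_{k+1})\le\eta\mu(G_k^*)$; combining with $\mu(G_k^*)\le\ZK\mu(G_k)$ yields the desired bound \eqref{x71} with $\gamma=\eta\ZK$. If at some stage $(G_k^*)^*=X$, then $X\in\ZB$ (and $\mu(X)<\infty$ by \eqref{h13} and \lem{L0}), so we set $G_{k+1}=X$ and terminate. Either the chain is finite with $G_N=X$, or it is infinite with $\mu(G_k)\ge 2^{k-1}\mu(G)\to\infty$ forced by the doubling step.

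To show every ball $B\in\ZB$ is absorbed by some $G_n$, fix $x_0\in G$ (so $x_0\in G_k$ for all $k$) and $y\in B$, and choose by B2) a ball $A\ni x_0,y$. If the chain is finite ending at $G_N=X$, trivially $B\subset G_N$; otherwise $\mu(G_k)\to\infty$, so pick $k$ with $\max(\mu(A),\mu(B))\le 2\mu(G_k)$. Since $x_0\in A\cap G_k$, the two-balls relation gives $A\subset G_k^*\subset G_{k+1}$, hence $y\in G_{k+1}$; since also $y\in B\cap G_{k+1}$ and $\mu(B)\le 2\mu(G_{k+1})$, a second application yields $B\subset G_{k+1}^*\subset G_{k+2}$. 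The coverage $X=\cup_k G_k$ is then immediate, as every $y\in X$ lies in some ball by B2), which is absorbed.

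For the general non-doubling case the construction is parallel but without the measure bound \eqref{x71}: when $\mu(X)<\infty$ one simply takes $G_2=X$ by \lem{L0}, while for $\mu(X)=\infty$ one interleaves the hull iteration with a countable ball cover of $X$ supplied by B3), at each step enlarging $G_k$ to contain both $G_k^*$ and the next ball of the enumeration (always possible by taking the hull of a suitable enclosing ball produced through B2)). The main obstacle I anticipate is this non-doubling infinite-measure case: without geometric growth of $\mu(G_k)$ the two-balls argument loses its driving force, so one must orchestrate the B3)-enumeration carefully to guarantee absorption of every ball; in the doubling case, everything falls into place as soon as \df{DD} is invoked at each step.
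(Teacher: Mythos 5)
Your doubling-case construction and the two-step absorption argument (absorb $A\ni x_0,y$ into $G_k^*$, then absorb $B$ into $G_{k+1}^*$) are correct and essentially match the paper, which likewise iterates hulls and uses the two-balls relation; the only difference is cosmetic (the paper first picks a single ball $C\supset A\cup B$ via B4) and absorbs $C$ once). The minor terminal-step issue when $(G_k^*)^*=X$ (the lower bound $2\mu(G_k)\le\mu(X)$ need not hold at the very last step) is present in the paper's treatment as well, so I do not count it against you.

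The genuine gap is exactly where you anticipated it: the general, non-doubling statement with $\mu(X)=\infty$, which is the first and main assertion of the lemma. Your plan to interleave the hull iteration with a countable cover supplied by B3) cannot work as stated: B3) only produces covers of a \emph{measurable set of finite symmetric difference} (up to $\varepsilon$), not an exact countable cover of $X$; and even an exact countable cover of $X$ by balls would not give what the lemma demands, namely that \emph{every} ball $B\in\ZB$ (of a possibly uncountable basis) is contained in some $G_n$. The paper's resolution avoids B3) entirely: fix $x_0\in G$, let $\ZA$ be the balls containing $x_0$, set $\eta=\sup_{A\in\ZA}\mu(A)$ (possibly $+\infty$), choose $\eta_n\nearrow\eta$, and build the chain so that $\mu(G_n)>\eta_n$ and $G_n^*\subset G_{n+1}$ (take $G_{l+1}=C^*$ where $C$ is the larger of $G_l^*$ and some $A\in\ZA$ with $\mu(A)>\eta_{l+1}$). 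Then for an arbitrary $B$, one finds $C\in\ZA$ with $C\supset B$; since $\mu(C)\le\eta$ and $\mu(G_n)>\eta_n\nearrow\eta$, eventually $\mu(C)\le 2\mu(G_n)$, and the two-balls relation gives $B\subset C\subset G_n^*\subset G_{n+1}$. The point you missed is that one does not need $\mu(G_n)\to\infty$ to drive the absorption; it suffices to force $\mu(G_n)$ up to the supremum of measures of balls through a fixed point, which is available without any doubling hypothesis.
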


\begin{proof}
	Choose an arbitrary point $x_0\in G$ and let $\ZA\subset \ZB$ be the family of balls containing the point $x_0$. Choose a sequence $\eta_n\nearrow\eta=\sup_{A\in \ZA}\mu(A)$, such that $\eta_1<\mu(G)$, where $\eta$ can also be infinity. Let us see by induction that there is an increasing  sequence of balls $G_n\in \ZA$ such that $G_1=G$, $\mu(G_n)> \eta_n$ and $G_n^*\subset G_{n+1}$. The base of induction is trivial. Suppose we have already chosen the first balls $G_k$, $k=1,2,\ldots, l$. There is a ball $A\in \ZA$ so that $\mu(A)>\eta_{l+1}$. Let $C$ be the biggest in measure among the balls $A$ and $G_l^*$ and denote $G_{l+1}=C^*$. By property B4) we get $A\cup G_l^*\subset C^*=G_{l+1}$. This implies $\mu(G_{l+1})\ge \mu(A)>\eta_{l+1}$ and $G_{l+1}\supset G_l^*$, completing the induction.
	Let us see that $G_n$ is our desired sequence of balls. Indeed, let $B$ be an arbitrary ball. By B2) property there is a ball $A$ containing $x_0$, such that $A\cap B\neq \varnothing$. Then by property B4) we may find a ball $C\supset A\cup B$. For some $n$ we will have $\mu(C)\le 2\mu(G_n)$ and so once again using B4), we get $B\subset  C\subset G_{n}^*\subset G_{n+1}$. 
	
	The second part of the lemma can be proved by a similar argument. Using the doubling condition (\df {DD}) one can easily find a sequence of balls $G=G_1,G_2, \ldots$ such that
	\begin{equation}
		G_k^*\subset G_{k+1},\quad 2\mu(G_k^*)\le \mu(G_{k+1})\le \eta \mu(G_k^*)\le \eta\ZK \mu(G_k).
	\end{equation}
	If $\mu(X)<\infty$, then clearly the ball sequence will be finite and its last term say $G_n$ will coincide with $X$. So the proof follows. If $\mu(X)=\infty$, then our ball sequence is infinite and $\mu(G_n)\to \infty$. As in the first part of the proof, for an arbitrary ball $B$ we can find a ball $C\supset B$ such that $C\cap G\neq \varnothing$. There is a ball $G_n$ such that $\mu(G_n)> \mu(C)$ and we get $B\subset  C\subset G_{n}^*\subset G_{n+1}$. This completes the proof of lemma.
\end{proof}
\begin{remark}\label{R2}
	If $\mu(X)<\infty$, then the sequence $G_n$ in \lem{L5} is finite and its last-most term coincides with $X$.
\end{remark}

\begin{lemma}\label{L1-1}
	Let $(X,\mu)$ be a measure space with a ball bases $\ZB$. If $E\subset X$ is bounded and a family of balls $\ZG $ is a covering of $E$, i.e. $E\subset \bigcup_{G\in \ZG}G$, then there exists a finite or infinite sequence of pairwise disjoint balls $G_k\in \ZG$ such that $E \subset \bigcup_k G_k^{*}$.
\end{lemma}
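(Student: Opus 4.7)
The plan is a Vitali-type greedy selection argument, implemented entirely through the two-balls relation furnished by axiom B4). Since $E$ is bounded, I first fix a ball $B\in\ZB$ with $E\subset B$; only balls $G\in\ZG$ that meet $E$ are relevant for covering $E$.

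A preliminary reduction handles the ``large balls'' case. If some $G\in\ZG$ satisfies $G\cap E\neq\varnothing$ and $\mu(G)\ge 2\mu(B)$, then by the two-balls relation $B\subset G^*$, so $E\subset G^*$ and the single ball $G_1=G$ does the job. Otherwise
\[
s_1:=\sup\{\mu(G):G\in\ZG,\ G\cap E\neq\varnothing\}\le 2\mu(B)<\infty,
\]
which makes the greedy construction well-defined.

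Next I would carry out the standard selection: choose $G_1\in\ZG$ with $G_1\cap E\neq\varnothing$ and $\mu(G_1)>s_1/2$; having picked pairwise disjoint $G_1,\ldots,G_k$, set
\[
s_{k+1}=\sup\{\mu(G):G\in\ZG,\ G\cap E\neq\varnothing,\ G\cap G_j=\varnothing\ \text{for all }j\le k\},
\]
and pick $G_{k+1}$ in the family defining $s_{k+1}$ with $\mu(G_{k+1})>s_{k+1}/2$; stop if that family is empty. To verify $E\subset\bigcup_k G_k^*$, take any $x\in E$, choose $G\in\ZG$ containing $x$, and let $k$ be the least index with $G\cap G_k\neq\varnothing$ (its existence is the one delicate point, addressed below). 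Then $G$ was admissible at step $k$, so $\mu(G)\le s_k<2\mu(G_k)$, and the two-balls relation gives $G\subset G_k^*$, hence $x\in G_k^*$.

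The main obstacle is ruling out, in the case of an infinite selection, that some $G\in\ZG$ meeting $E$ stays disjoint from every $G_k$. Here I would use that each chosen $G_k$ satisfies $G_k\cap B\supset G_k\cap E\neq\varnothing$ and $\mu(G_k)\le s_1\le 2\mu(B)$, so the two-balls relation yields $G_k\subset B^*$. Since the $G_k$ are pairwise disjoint, $\sum_k\mu(G_k)\le\mu(B^*)<\infty$, forcing $\mu(G_k)\to 0$. If such a bad $G$ existed, it would remain admissible at every step, giving $\mu(G)\le s_k<2\mu(G_k)\to 0$, contradicting axiom B1). When the procedure stops at a finite stage the required $k$ exists automatically, since by definition every admissible $G$ then meets one of the already chosen balls.
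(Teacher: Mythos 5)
Your proposal is correct and follows essentially the same route as the paper: reduce to the case where all relevant balls have measure comparable to (and hence lie inside the hull of) the bounding ball $B$, run the greedy Vitali-type selection with the factor $1/2$, and use $\sum_k\mu(G_k)\le\mu(B^*)<\infty$ to force $\mu(G_k)\to 0$ so that every admissible $G$ must eventually meet some $G_k$ with $\mu(G)\le 2\mu(G_k)$, whence $G\subset G_k^*$ by the two-balls relation. The only differences are cosmetic (you index by the least $k$ with $G\cap G_k\neq\varnothing$ rather than by the measure threshold, and you explicitly discard balls not meeting $E$).
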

\begin{proof}
	The boundedness of $E$ implies $E\subset B$ for some $B\in \ZB$. If there is a ball $G\in\ZG$ so that $G\cap B\neq \varnothing$, $\mu(G)> \mu(B)$, then by two balls relation we will have $E\subset B\subset G^{*}$. Thus our desired sequence can be formed by a single element $G$. Hence we can suppose that every $G\in\ZG$ satisfies $G\cap B\neq \varnothing$, $\mu(G)\le \mu(B)$ and again by two balls relation $G\subset B^*$. Therefore, $\bigcup_{G\in\ZG}G\subset B^{*}$.
	Choose $G_1\in \ZG$, satisfying $\mu(G_1)> \frac{1}{2}\sup_{G\in\ZG}\mu(G)$. Then, suppose by induction we have already chosen elements $G_1,\ldots,G_k$ from $\ZG$. Choose $G_{k+1}\in \ZG$  disjoint with the balls $G_1,\ldots,G_k$ such that
	\begin{equation}\label{b1}
		\mu(G_{k+1})> \frac{1}{2}\sup_{G\in \ZG:\,G\cap G_j=\varnothing,\,j=1,\ldots,k}\mu(G).
	\end{equation}
	If for some $n$ we will not be able to determine $G_{n+1}$ the process will stop and we will get a finite sequence $G_1,G_2,\ldots, G_n$. Otherwise our sequence will be infinite. We shall consider the infinite case of the sequence (the finite case can be done similarly). Since the balls $G_n$ are pairwise disjoint and $G_n\subset B^{*}$, we have $\mu(G_n)\to 0$. Choose an arbitrary $G\in\ZG$ with $G\neq G_k$, $k=1,2,\ldots $ and let $m$ be the smallest integer satisfying
	$\mu(G)\ge 2\mu(G_{m+1})$.
	So we have $G \cap G_j\neq\varnothing$
	for some $1\le j\le m$, since otherwise by \e{b1}, $G$ had to be chosen instead of $G_{m+1}$. Besides, we have $\mu(G)< 2\mu(G_{j})$ because of the minimality property of $m$, and so by two balls relation $G\subset G_{j}^{*}$. Since $G\in \ZG$ was chosen arbitrarily, we get $E \subset\bigcup_{G\in \ZG} G\subset \bigcup_k G_k^{*}$.
\end{proof}

\begin{definition}\label{D1}
	For a measurable set $E\subset X$ a point $x\in E$ is said to be density point if for any $\varepsilon>0$ there exists a ball $B\ni x$ such that
	\begin{equation*}
		\mu(B\cap E)>(1-\varepsilon )\mu(B).
	\end{equation*} 
	We say a ball basis satisfies the density property if for any measurable set $E$ almost all points $x\in E$ are density points. 
\end{definition}
\begin{lemma}\label{L12}
	Every ball basis $\ZB$ satisfies the density condition.
\end{lemma}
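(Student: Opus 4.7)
The plan is to mimic the classical proof of the Lebesgue density theorem, replacing approximation by continuous functions with approximation of $E$ by a countable union of balls via property B3), and using \trm{T1-1} (the weak $L^1$ bound for $\MM$) to control the remainder. Fix a measurable set $E\subset X$. It suffices to show that for every fixed $\varepsilon\in(0,1)$ the set
\[
	F_\varepsilon:=\{x\in E:\,\mu(B\setminus E)\ge \varepsilon\mu(B)\text{ for every ball }B\ni x\}
\]
has outer measure zero, since the set of non-density points of $E$ then coincides with the countable union $\bigcup_{n\ge 1}F_{1/n}$.

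To estimate $\mu^*(F_\varepsilon)$, fix $\delta>0$ and invoke B3) to choose balls $B_1,B_2,\ldots$ whose union $V:=\bigcup_k B_k$ satisfies $\mu(E\bigtriangleup V)<\delta$. Put $f:=\ZI_{V\setminus E}$, so $\|f\|_{L^1(X)}=\mu(V\setminus E)<\delta$, and split $F_\varepsilon\subset (E\setminus V)\cup (F_\varepsilon\cap V)$. The first piece has measure at most $\delta$. If $x\in F_\varepsilon\cap V$, then $x\in B_k$ for some $k$, and since $B_k\subset V$ gives $B_k\setminus E=B_k\cap(V\setminus E)$, the defining property of $F_\varepsilon$ applied to $B=B_k$ yields
\[
	\MM f(x)\ge \frac{1}{\mu(B_k)}\int_{B_k}f=\frac{\mu(B_k\setminus E)}{\mu(B_k)}\ge \varepsilon.
\]
Hence $F_\varepsilon\cap V\subset\{\MM f>\varepsilon/2\}$, and \trm{T1-1} gives $\mu^*(F_\varepsilon\cap V)\lesssim \delta/\varepsilon$. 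Altogether $\mu^*(F_\varepsilon)\lesssim \delta+\delta/\varepsilon$, and sending $\delta\to 0$ forces $\mu^*(F_\varepsilon)=0$.

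The only place that needs a bit of care is measurability: neither $F_\varepsilon$ nor the level sets of $\MM f$ are measurable a priori (cf.\ \rem{R1}), so the estimates must be stated in terms of the outer measure $\mu^*$ throughout; the final conclusion $\mu^*(F_\varepsilon)=0$ then automatically promotes $F_\varepsilon$ to a genuine null set. I do not expect any real obstacle beyond this; the two substantive ingredients are just the approximation property B3) and the weak-$L^1$ maximal estimate \trm{T1-1}, and neither doubling, regularity, nor the hull-ball construction enters the argument.
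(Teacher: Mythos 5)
Your proof is correct, but it takes a genuinely different route from the paper. The paper argues by contradiction: it first reduces to bounded sets via \lem{L5}, takes a measurable cover $\bar F$ of the putative bad subset $F$, approximates $\bar F$ by balls using B3), splits those balls according to whether they meet $F$, and then invokes the Vitali-type covering \lem{L1-1} to extract disjoint balls and reach $\mu^*(F)\lesssim\varepsilon$. You instead run the standard maximal-function proof of the Lebesgue density theorem: approximate $E$ by a union of balls $V$, observe that every non-density point of $E$ lying in $V$ sits in some $B_k\subset V$ on which the average of $\ZI_{V\setminus E}$ is at least $\varepsilon$, and finish with the weak-$L^1$ bound of \otrm{T1-1}. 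Your key step is sound --- for $x\in F_\varepsilon\cap V$ one indeed has $B_k\setminus E=B_k\cap(V\setminus E)$ because $B_k\subset V$, so $\MM f(x)\ge\varepsilon$ --- and your handling of measurability via $\mu^*$ and countable subadditivity is exactly what is needed. What your route buys is brevity: you need neither the reduction to bounded sets (which the paper performs only because \lem{L1-1} requires boundedness) nor the measurable-cover bookkeeping around $\bar F$. What it costs is self-containment: the combinatorial content is outsourced to \otrm{T1-1}, which is imported from \cite{Kar3}; this is legitimate since that theorem is proved there by a covering argument independent of the density property (so there is no circularity), but the paper's proof deliberately relies only on the covering lemma established in the same section.
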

\begin{proof}
	Applying \lem{L5} one can check that it is enough to establish the density property for the bounded measurable sets. Suppose to the contrary there exist a bounded measurable set $E\subset X$ together its subset $F\subset E$ (maybe non-measurable) with an outer measure $\mu^*(F)>0$
	such that
	\begin{equation}\label{z38}
		\mu(B\setminus E)>\alpha\mu(B) \text{ whenever }B\in\ZB,\, B\cap F\neq\varnothing,
	\end{equation}
	where $0<\alpha<1$. According to the definition of the outer measure one can find a measurable set $\bar F$ such that 
	\begin{equation}\label{z2}
		F\subset \bar F\subset E, \quad \mu(\bar F)=\mu^*(F).
	\end{equation}
	By B3)-condition there is a sequence of balls $B_k$, $k=1,2,\ldots $, such that
	\begin{equation}\label{z42}	
		\mu\left(\bar F\bigtriangleup (\cup_k B_k)\right)<\varepsilon.
	\end{equation}
	We discuss two collections of balls $B_k$, satisfying either $B_k\cap F=\varnothing$ or $B_k\cap F\neq\varnothing$. For the first collection we have 
	\begin{equation*}
		\mu^*(F)\le \mu\left(\bar F\setminus \bigcup_{k:\,B_k\cap F=\varnothing} B_k\right)\le \mu(\bar F)=\mu^*(F).
	\end{equation*}
	This implies 
	\begin{equation*}
		\mu\left(\bar F\bigcap \left(\bigcup_{k:\,B_k\cap F=\varnothing} B_k\right)\right)=0
	\end{equation*}
	and therefore, combining also \e{z42}, we obtain
	\begin{equation}\label{z3}
		\mu\left(\bigcup_{k:\,B_k\cap F=\varnothing} B_k\right)<\varepsilon.
	\end{equation}
	Now consider the balls, satisfying $B_k\cap F\neq\varnothing$. It follows from \e {z38} and \e{z2} that 
	\begin{equation}\label{z41}
		\mu(B_k\setminus \bar F)\ge \mu(B_k\setminus E)>\alpha \mu(B_k),\quad k=1,2,\ldots.
	\end{equation}
	Since $E$ and so $\bar F$ are bounded, applying \lem {L1-1} and \e{z42}, one can find a subsequence of pairwise disjoint balls $\tilde B_k$, $k=1,2,\ldots$, such that
	\begin{equation*}
		\mu\left(\bar F\setminus\cup_k\tilde B_k^{*}\right)<\varepsilon.
	\end{equation*} 
	Thus, from B4)-condition, \e{z42}, \e {z3} and \e {z41}, we obtain
	\begin{align*}
		\mu^*(F)< \mu(\bar F)&\le \mu\left(\cup_k\tilde B_k^{*}\right)+\varepsilon\le \ZK\sum_k\mu(\tilde B_k)+\varepsilon\\
		&= \ZK\mu\left(\bigcup_{k:\,\tilde B_k\cap F=\varnothing} \tilde B_k\right)+\ZK\sum_{k:\,\tilde B_k\cap F\neq\varnothing} \mu(\tilde B_k)+\varepsilon\\
		&<\ZK\varepsilon+\frac{\ZK}{\alpha}\sum_k\mu(\tilde B_k\setminus \bar F)+\varepsilon\\
		&\le (\ZK+1)\varepsilon+\frac{\ZK}{\alpha}\mu\left(\bar F\bigtriangleup( \cup_k B_k)\right)<\varepsilon\left(2\ZK+1+\frac{\ZK}{\alpha}\right).
	\end{align*}
	Since $\varepsilon $ can be arbitrarily small, we get $\mu^*(F)=0$ and so a contradiction.
\end{proof}

\section{Proof of \trm{T5}}

The following two lemmas are standard and well-known in the classical situations.
\begin{lemma}\label{L6}
	For any function $f\in L^r_{loc}(X)$ and balls $A, B$ with $A\cap B\neq\varnothing$, $\mu(A)\le \mu(B)$ we have
	\begin{equation}\label{x44}
		|f_A-f_B|\lesssim\left(\frac{\mu(B)}{\mu(A)}\right)^{1/r}\cdot  \langle f\rangle_{\#,A}^*.
	\end{equation}
	\begin{proof}
		First suppose that $A\subset B$. Then we get
		\begin{align}
			\left|f_{A}-f_{B}\right|&\le \left(\frac{1}{\mu(A)}\int_{A}|f-f_{B}|^r\right)^{1/r}\\
			&\le\left(\frac{\mu(B)}{\mu(A)}\right)^{1/r} \cdot \left(\frac{1}{\mu(B)}\int_{B}|f-f_{B}|^r\right)^{1/r}\le \left(\frac{\mu(B)}{\mu(A)}\right)^{1/r}\cdot \langle f\rangle_{\#,B}.\label{x72}
		\end{align}
	In the general case, we can write $A\subset B^*$ and so applying \e{x72} we obtain
	\begin{align}
	\left|f_{A}-f_{B}\right|&\le \left|f_{A}-f_{B^*}\right|+\left|f_{B}-f_{B^*}\right|\\
	&\le \left(\left(\frac{\mu(B^*)}{\mu(A)}\right)^{1/r}+\left(\frac{\mu(B^*)}{\mu(B)}\right)^{1/r}\right) \langle f\rangle_{\#,B^*}\lesssim \left(\frac{\mu(B)}{\mu(A)}\right)^{1/r}\cdot  \langle f\rangle_{\#,A}^*.
	\end{align}
	\end{proof}
\end{lemma}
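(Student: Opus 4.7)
The plan is first to handle the case $A \subset B$ by a direct application of Jensen's (or H\"older's) inequality to the difference of averages, and then to reduce the general case to that one via the hull ball $B^*$ provided by axiom B4).

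In the contained case, writing $f_A - f_B = \frac{1}{\mu(A)}\int_A(f - f_B)$ and applying Jensen's inequality yields $|f_A - f_B| \le \bigl(\frac{1}{\mu(A)}\int_A|f-f_B|^r\bigr)^{1/r}$; enlarging the domain of integration in the numerator from $A$ to $B$ produces the factor $(\mu(B)/\mu(A))^{1/r}$ multiplying $\langle f\rangle_{\#,B}$. Since $B \supset A$, the definition \eqref{x5} of $\langle f\rangle_{\#,A}^*$ dominates $\langle f\rangle_{\#,B}$, giving the desired bound in this subcase.

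For the general case, where $A \cap B \neq \varnothing$ and $\mu(A) \le \mu(B)$, the two balls relation derived from B4) (stated at the start of Section 3) guarantees $A \subset B^*$, while trivially $B \subset B^*$ as well. I would then apply the contained-case estimate to both $(A, B^*)$ and $(B, B^*)$ and combine via the triangle inequality $|f_A - f_B| \le |f_A - f_{B^*}| + |f_B - f_{B^*}|$. Using the control $\mu(B^*) \le \ZK\,\mu(B)$ from \eqref{h13} together with $\mu(A) \le \mu(B)$, both volume factors that arise are bounded by admissible multiples of $(\mu(B)/\mu(A))^{1/r}$; finally, $B^* \supset A$ ensures $\langle f\rangle_{\#,B^*} \le \langle f\rangle_{\#,A}^*$, completing the estimate.

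The argument is essentially routine; the only point where some care is needed is the passage to the hull ball $B^*$, since without the bound \eqref{h13} the replacement of $B$ by $B^*$ in the volume ratio could be catastrophic. This is the only substantive obstacle and is resolved by axiom B4) alone, with no appeal to the regularity condition or to the basis-level doubling hypothesis \eqref{h73}.
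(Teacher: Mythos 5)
Your proposal is correct and follows essentially the same route as the paper: Jensen's inequality plus domain enlargement for the nested case, then reduction of the general case to $(A,B^*)$ and $(B,B^*)$ via the two balls relation, the bound $\mu(B^*)\le \ZK\mu(B)$, and the monotonicity $\langle f\rangle_{\#,B^*}\le \langle f\rangle_{\#,A}^*$. Your closing observation that only B4) is needed (no regularity or doubling) is also consistent with the paper, which invokes doubling only in the subsequent Lemma 4.2.
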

\begin{lemma}\label{L10}
	Let $\ZB$ be a doubling ball-basis on $X$. Then for any $f\in L^r_{loc}(X)$ and balls $A, B$ with $A\cap B\neq\varnothing$, $\mu(A)\le \mu(B)$ we have that
	\begin{equation}\label{x42}
		\langle f-f_A\rangle_B\lesssim (1+\log(\mu(B)/\mu(A)))\cdot \langle f\rangle_{\#,A}^*.
	\end{equation}
\end{lemma}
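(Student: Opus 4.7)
The plan is to build a doubling chain of balls from $A$ up to a ball $C$ that simultaneously contains $B$ and has $\mu(C)\sim\mu(B)$, then telescope the means along this chain using \lem{L6}.

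First I would apply \lem{L5} to the starting ball $A$ to obtain a sequence $A=G_1\subset G_2\subset\ldots$ with $G_k^*\subset G_{k+1}$ and $2\mu(G_k)\le\mu(G_{k+1})\le\gamma\mu(G_k)$. Let $N$ be the smallest index with $G_N\supset B$ (such $N$ exists by \lem{L5}). Because $A\subset G_k$ and $A\cap B\neq\varnothing$, each $G_k$ meets $B$; so if $k$ is the smallest index with $\mu(G_k)\ge\mu(B)$, the two-balls relation yields $B\subset G_k^*\subset G_{k+1}$, which gives $N\le k+1$. From $\mu(G_k)\le\gamma\mu(G_{k-1})<\gamma\mu(B)$ and $\mu(G_{k+1})\le\gamma\mu(G_k)$ I obtain $\mu(G_N)\le\gamma^2\mu(B)$, so $\mu(G_N)\sim\mu(B)$. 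Combined with $\mu(G_N)\ge2^{N-1}\mu(A)$ this forces $N\lesssim 1+\log(\mu(B)/\mu(A))$. Set $C=G_N$.

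Next I would split
\begin{equation}
\langle f-f_A\rangle_B\le \langle f-f_C\rangle_B+|f_C-f_A|.
\end{equation}
For the first term, since $B\subset C$ and $\mu(C)\sim\mu(B)$,
\begin{equation}
\langle f-f_C\rangle_B\le\left(\frac{\mu(C)}{\mu(B)}\right)^{1/r}\langle f\rangle_{\#,C}\lesssim \langle f\rangle_{\#,A}^*,
\end{equation}
using $A\subset C$ to pass to $\langle f\rangle_{\#,A}^*$ (the defining supremum in \e{x5} is over balls containing $A$, and $C$ is such a ball).

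For the second term, telescope along the chain and apply \lem{L6} to each consecutive pair $G_k\subset G_{k+1}$ with $\mu(G_{k+1})\le\gamma\mu(G_k)$:
\begin{equation}
|f_{G_k}-f_{G_{k+1}}|\lesssim\left(\frac{\mu(G_{k+1})}{\mu(G_k)}\right)^{1/r}\langle f\rangle_{\#,G_k}^*\lesssim \langle f\rangle_{\#,A}^*,
\end{equation}
where the last inequality uses $A\subset G_k$ so that $\langle f\rangle_{\#,G_k}^*\le\langle f\rangle_{\#,A}^*$. Summing,
\begin{equation}
|f_A-f_C|\le\sum_{k=1}^{N-1}|f_{G_k}-f_{G_{k+1}}|\lesssim N\cdot\langle f\rangle_{\#,A}^*\lesssim(1+\log(\mu(B)/\mu(A)))\langle f\rangle_{\#,A}^*,
\end{equation}
and combining the two estimates yields \e{x42}.

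The main obstacle is purely bookkeeping: producing a ball $C\supset A\cup B$ with $\mu(C)\sim\mu(B)$ from the doubling chain of $A$ alone. The chain naturally grows past $A$, but one needs the two-balls relation together with the $G_k^*\subset G_{k+1}$ nesting to guarantee that $B$ is swallowed exactly when the measure first reaches the level of $\mu(B)$, and then to bound $\mu(C)/\mu(B)$ by an admissible constant so that the final factor $(\mu(C)/\mu(B))^{1/r}$ can be absorbed. Once this is set up the rest is the standard telescoping identity.
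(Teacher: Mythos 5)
Your proof is correct and follows essentially the same route as the paper: both build the doubling chain $A=G_0\subset G_1\subset\cdots$ from \lem{L5}, stop at the first scale comparable to $\mu(B)$ (the paper terminates the telescoping at $B^*$ via the intersecting-balls case of \lem{L6}, while you terminate at a chain ball $C=G_N\supset B$ with $\mu(C)\lesssim\mu(B)$ — an immaterial difference), and telescope the means with \lem{L6} over $N\lesssim 1+\log(\mu(B)/\mu(A))$ steps.
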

\begin{proof} 
Applying the second part of \lem{L5}, we find a sequence of balls $A=G_0\subset G_1\subset \dots$, satisfying \e{x71}. We have
$\mu(G_n)<\mu(B^*)\le \mu(G_{n+1})$ for some integer $n$, where we can write $n\lesssim 1+\log(\mu(B)/\mu(A))$.
	By \lem {L6}, 
	\begin{align}
		&|f_{G_k}-f_{G_{k+1}}|\lesssim \langle f\rangle_{G_{k},\#}^*\lesssim \langle f\rangle_{\#,A}^* \hbox{ (see } \e{x5}),\\
		&|f_{G_n}-f_{B^*}|\lesssim \langle f\rangle_{G_n,\#}^*\lesssim\langle f\rangle_{\#,A}^*,
	\end{align}
	so we obtain
	\begin{align*}
		\langle f-f_A\rangle_B&\le \langle f-f_{B^*}\rangle_B+|f_A-f_{B^*}|\\
		&\lesssim \langle f-f_{B^*}\rangle_{B^*}+ \sum_{k=0}^{n-1}|f_{G_k}-f_{G_{k+1}}|+|f_{G_n}-f_{B^*}|\\
		&\lesssim n\langle f\rangle_{\#,A}^*\lesssim (1+\log(\mu(B)/\mu(A)))\langle f\rangle_{\#,A}^*.
	\end{align*}
\end{proof}

\begin{proof}[Proof of \trm{T5}]
		First observe that we need to consider only the operator $T^+$, since if $T_\alpha$ is vanishing, then so are operators $-T_\alpha$ and $|T_\alpha|$. Hence, we suppose 
	\begin{equation}\label{r42}
		Tf(x)=T^+f(x)=\sup_\alpha T_\alpha f(x).
	\end{equation}
	Let $f\in L^r(X)$ and $B\in \ZB$. Applying the vanishing property (see \df{D2}), we find a ball $B'\supset B^*$ such that 
	\begin{equation}
		\sup_\alpha \OSC_B(T_\alpha(\ZI_{B''}))<\delta
	\end{equation}
	for every ball $B''\supset B'$. Then let $G_n$ be the sequence of balls generated from \lem{L5}. We have $\langle f\rangle_{G_n}^*\to 0$ and so there is a ball $G_n\supset B'\supset B^*$ such that $\langle f\rangle^*_{G_n} <\delta$.
	Thus for the ball $G=G_n^*$ we have
	\begin{align}
		&B^*\subset G,\\
		&\sup_\alpha \OSC_B(T_\alpha(\ZI_{G}))<\delta,\label{y86}\\
		&\sup_\alpha\OSC_{B}(T_\alpha(f\cdot \ZI_{X\setminus G}))\le\sup_\alpha\OSC_{G_n}(T_\alpha(f\cdot \ZI_{X\setminus G_n^*}))\\
		&\qquad\qquad\qquad\qquad\qquad \,\,\,\,\le  \sup_\alpha \ZL_\omega(T_\alpha)\cdot \langle f\rangle^*_{G_n}<\delta \sup_\alpha\ZL_\omega(T_\alpha). \label{y85}
	\end{align}
	In the last bound we use \e{y83}, omitting the factor, containing $\omega$ in it (the factor $\omega$ will be used later). Fixing the ball $G$, we split the function $f$ as
	\begin{align*}
		f&=f\cdot \ZI_{X\setminus G}+(f-f_B)\ZI_{B^*}+(f-f_B)\ZI_{G\setminus B^*}+f_B\ZI_{G}\\
		&=f_0+f_1+f_2+f_3.
	\end{align*}
	Choosing small enough $\delta>0$, from \e{y86}, \e{y85} we may obtain
	\begin{equation}\label{x2}
		\sup_\alpha \OSC_B(T_\alpha(f_0))<\varepsilon,\quad \sup_\alpha \OSC_B(T_\alpha(f_3))<\varepsilon
	\end{equation}
	for an arbitrary $\varepsilon>0$. Consider the set $E_{B}=\{y\in B:\, |Tf_1(y)|\le \lambda\}$, which can be non-measurable. Since $\|T\|_{L^r(X)\to L^{r,\infty}(X)}<\infty$ for 
	\begin{align}
		\lambda=\ZK^{1/r}(1-\beta)^{-1/r}\|T\|_{L^r(X)\to L^{r,\infty}(X)}\langle f_1\rangle_{B^*}
	\end{align}
	we have
	\begin{equation}
		\mu^*(B\setminus E_{B})=\mu^*\{y\in B:\, |Tf_1(y)|>\lambda\}\le\frac{(1-\beta )\mu(B^*)}{\ZK}\le  (1-\beta )\mu(B).
	\end{equation}
	Thus, we may find a measurable set $\bar E_B\subset E_B$ such that 
\begin{equation}\label{r51}
		\mu (\bar E_B)\ge \beta\mu(B)
\end{equation}
	Then, applying \lem{L10}, we can write
	\begin{align}
		& |Tf_1(y)|\lesssim(1-\beta)^{-1/r}\|T\|_{L^r(X)\to L^{r,\infty}(X)}\langle f-f_B\rangle_{B^*}\\
		&\qquad \quad\lesssim(1-\beta)^{-1/r} \|T\|_{L^r(X)\to L^{r,\infty}(X)}\langle f\rangle^*_{\#,B}\text{ for every }y\in \bar E_B.\label{y87}
	\end{align}
	Now choose two arbitrary points $x,x'\in \bar E_B$ and suppose $Tf(x)>Tf(x')$. Clearly, for some $\alpha$ we have
	\begin{align}
		|Tf(x)-Tf(x')|=Tf(x)-Tf(x')\le 2( T_\alpha f(x)-T_\alpha f(x')).
	\end{align}
	On the other hand, using \e{y87}, for small enough $\varepsilon$ in \e{x2} we obtain
	\begin{align}
		T_\alpha f(x)-&T_\alpha f(x')\\
		&\le \OSC_B(T_\alpha(f_0))+\OSC_B(T_\alpha(f_3))\\
		&\qquad +|T_\alpha f_1(x)|+|T_\alpha f_1(x')|+|T_\alpha f_2(x)-T_\alpha f_2(x')|\\
		&\le 2\varepsilon +|T f_1(x)|+|T f_1(x')|+|T_\alpha f_2(x)-T_\alpha f_2(x')|\\
		&\lesssim (1-\beta)^{-1/r}\|T\|_{L^r(X)\to L^{r,\infty}(X)}\langle f\rangle^*_{\#,B}\\
		&\qquad +|T_\alpha f_2(x)-T_\alpha f_2(x')|.\label{y28}
	\end{align}
	Applying \e{y83} with $\omega(t)=\log^{-1}(1+t)$, we get
	\begin{align}
		|T_\alpha f_2(x)-T_\alpha f_2(x')|&\le \OSC_{B}(T_\alpha (f_2))\\
		&\le \ZL_\omega(T_\alpha)\sup_{C\in \ZB,\,C\supset B}\bigg(\log^{-1}\left(1+\frac{\mu(C)}{\mu(B)}\right)\langle f_2\rangle_{C}\bigg)\\
		&\le 2\ZL_\omega(T_\alpha)\log^{-1}\left(1+\frac{\mu(A)}{\mu(B)}\right)\langle f_2\rangle_{A}
	\end{align} 
	for some ball $A\supset B$. Then, applying \lem{L10}, we obtain
	\begin{align}
		|T_\alpha f_2(x)-T_\alpha f_2(x')|&\le 2\ZL_\omega(T_\alpha)\log^{-1}\left(1+\frac{\mu(A)}{\mu(B)}\right)\langle f-f_B\rangle_{A}\\
		&\lesssim \ZL_\omega(T_\alpha)\log^{-1}\left(1+\frac{\mu(A)}{\mu(B)}\right)\log\left(1+\frac{\mu(A)}{\mu(B)}\right)\langle f\rangle^*_{\#,B}\\
		&\le \sup_\alpha \ZL_\omega(T_\alpha)\langle f\rangle^*_{\#,B}.\label{y29}
	\end{align}
	Hence, combining \e{r51}, \e{y28} and \e{y29}, we obtain \e{y30}.
\end{proof}
\begin{proof}[Proof of \trm{T1}]
The proof of theorem is similar that of \trm{T5}, so we omit its details.  First, we can  consider only the operator $T^+$. Thus we have \e{r42}. For a given ball $B$ we consider the set 
\begin{equation*}
	E_{B}=\{y\in B:\, |T(f\cdot \ZI_{B^*})(y)|\le \lambda\}
\end{equation*}
where
\begin{align}
	\lambda=\ZK^{1/r}(1-\beta)^{-1/r}\|T\|_{L^r(X)\to L^{r,\infty}(X)}\langle f\rangle_{B^*}.
\end{align}
For a measurable set $\bar E_B\subset E_B$ we will have
\begin{align}
	&\mu(\bar E_{B})>\beta\mu(B),\\
	& |T(f\cdot \ZI_{B^*})(y)|\lesssim(1-\beta)^{-1/r}\|T\|_{L^r(X)\to L^{r,\infty}(X)}\langle f\rangle_{B^*}\\
	&\qquad \quad\le(1-\beta)^{-1/r} \|T\|_{L^r(X)\to L^{r,\infty}(X)}\| f\|_\infty\text{ for every }y\in \bar E_B.\label{y88}
\end{align}
Choose arbitrary points $x,x'\in \bar E_B$ and suppose $Tf(x)>Tf(x')$. Then for some $\alpha$ we have
\begin{align}
	|Tf(x)-Tf(x')|=Tf(x)-Tf(x')\le 2( T_\alpha f(x)-T_\alpha f(x'))
\end{align}
and hence, applying \e{y83} with $\omega(t)\equiv 1$ and \e{y88}, we obtain
\begin{align*}
	T_\alpha f(x)-&T_\alpha f(x')\\
	&\le|T_\alpha (f\cdot \ZI_{B^*})(x)|+|T_\alpha  (f\cdot \ZI_{B^*})(x')|+  \OSC_B(T_\alpha (f\cdot \ZI_{X\setminus B^*}))\\
	&\lesssim (1-\beta)^{-1/r} \|T \|_{L^r(X)\to L^{r,\infty}(X)}\| f\|_\infty+\ZL(T_\alpha ) \cdot \|f\|_\infty\\
	&\le ((1-\beta)^{-1/r} \|T\|_{L^r(X)\to L^{r,\infty}(X)}+\sup_\alpha \ZL(T_\alpha )) \cdot \|f\|_\infty,
\end{align*}
completing the proof of the theorem.
\end{proof}

\section{Proof of \trm{T11}}\label{S5}
{\begin{lemma}\label{L9}
	For any measurable finite function $f$ on $X$ and for any ball $B$ there exists a measurable set $E\subset B$ such that
	\begin{align*}
		\mu(E)\ge \alpha \mu(B),\quad \OSC_E(f)\le \OSC_\alpha(f).
	\end{align*}
\end{lemma}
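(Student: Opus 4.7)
The plan is to exploit the interval characterization $\OSC_{B,\alpha}(f)=\inf_{a<b}(b-a)$ recorded right after \e{y100}, where the infimum runs over pairs with $\mu\{x\in B:\, f(x)\in [a,b]\}>\alpha\mu(B)$, and to show that this infimum is essentially attained once the strict measure inequality is relaxed to a non-strict one. Write $s=\OSC_{B,\alpha}(f)$, pick a minimizing sequence $a_n<b_n$ with $b_n-a_n\to s$ and $E_n:=\{x\in B:\, f(x)\in[a_n,b_n]\}$ satisfying $\mu(E_n)>\alpha\mu(B)$, and aim to extract a subsequential limit $[a_n,b_n]\to [a,b]$ so that the set $E:=\{x\in B:\, a\le f(x)\le b\}$ is the desired witness.

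The main technical point is to show that $\{a_n\}$ and $\{b_n\}$ stay bounded, and this is where the a.e.\ finiteness of $f$ enters. Since $\mu(B)<\infty$ and $f$ is finite a.e.\ on $B$, we have $\mu\{x\in B:\, |f(x)|>N\}\to 0$ as $N\to\infty$, so one can fix $N_0$ with $\mu\{x\in B:\, |f(x)|>N_0\}<\alpha\mu(B)$. Then each interval $[a_n,b_n]$ must meet $[-N_0,N_0]$, for otherwise $E_n\subset\{|f|>N_0\}$, contradicting $\mu(E_n)>\alpha\mu(B)$. Combined with $b_n-a_n\le s+1$ for all large $n$, this forces both sequences into a fixed compact subset of $\ZR$.

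Passing to a subsequence we may assume $a_n\to a$, $b_n\to b$ with $b-a=s$. For each integer $k\ge 1$, the inclusion $[a_n,b_n]\subset[a-1/k,b+1/k]$ holds for all sufficiently large $n$, hence
\begin{equation*}
	\mu\{x\in B:\, a-1/k\le f(x)\le b+1/k\}\ge\mu(E_n)>\alpha\mu(B).
\end{equation*}
Letting $k\to\infty$ and invoking continuity from above of the finite measure on $B$ applied to the nested sets $\{x\in B:\, a-1/k\le f(x)\le b+1/k\}$ yields $\mu(E)\ge\alpha\mu(B)$. Since $f\in[a,b]$ a.e.\ on $E$, we conclude $\OSC_E(f)\le b-a=\OSC_{B,\alpha}(f)$, finishing the proof.

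The only step that is not entirely routine is securing the boundedness of the minimizing sequence; once that is in hand, sequential compactness in $\ZR$ together with continuity of a finite measure under decreasing intersections delivers the statement. The argument uses only the basic measure-theoretic structure and does not invoke any ball-basis axioms beyond $\mu(B)<\infty$ from B1).
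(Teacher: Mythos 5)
Your argument is correct and is essentially the paper's own proof: both take a minimizing sequence of intervals from the equivalent characterization of $\OSC_{B,\alpha}$, extract convergent subsequences $a_{n_k}\to a$, $b_{n_k}\to b$, and take $E=\{x\in B:\, f(x)\in[a,b]\}$. You merely fill in the two details the paper leaves as "clear" and "one can check," namely the boundedness of $a_n,b_n$ via a.e.\ finiteness of $f$ and the measure lower bound via continuity from above.
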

\begin{proof}
	Recall that the functional $\OSC_{B,\alpha}(f)$ can be equivalently defined as follows: that is $\OSC_{B,\alpha}(f)=\inf_{a<b}(b-a)$, where the infimum is taken over all the numbers $a<b$ satisfying $\mu\{x\in B:\, f(x)\in [a,b]\}>\alpha\mu(B)$. Hence, there is a sequence of intervals $[a_n,b_n]$ such that $b_n-a_n\to \OSC_\alpha(f)$ and $\mu\{x\in B:\, f(x)\in [a_n,b_n]\}>\alpha\mu(B)$. It is clear that $a_n$ and $b_n$ are bounded sequences and so those have convergent subsequences, $a_{n_k}\to a$ and $b_{n_k}\to b$. One can check that the set $E=\{x\in B:\, f(x)\in [a,b]\}$ satisfies the required conditions.
\end{proof}}
{Let $\gamma$, $\theta$ be the constants from \lem{L5} (see \e{x71}) and from  \df{D3} respectively. We have $\gamma>2$, $0<\theta<1$ and the ball basis constant $\ZK>1$. Thus we can write
\begin{equation}\label{x74}
	\alpha= 1-\frac{\varepsilon \theta}{4\gamma^2\ZK}>1/2
\end{equation}
for any $0<\varepsilon<1$.}
\begin{proposition}\label{P}
	Let $\ZB$ satisfy doubling and regularity properties,  $0<\varepsilon<1$ and suppose that $\alpha$ is the constant in \e{x74}.
	If $g\in \BMO_\alpha(X)$ (perhaps non-measurable), then
	\begin{equation}\label{x10}
		\mu^*\{x\in B:\, |g(x)|>\lambda+\|g\|_{\BMO_\alpha}\}\le \varepsilon\cdot \mu^*\{x\in B:\, |g(x)|>\lambda\}
	\end{equation}
	for any ball $B\in \ZB$ and any number $\lambda>0$, provided
	\begin{equation}\label{x17}
		\mu^*\{x\in B:\, |g(x)|>\lambda\}\le \frac{\theta }{5\ZK\gamma^2}\cdot \mu(B).
	\end{equation}
\end{proposition}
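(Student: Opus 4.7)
The plan is to run a Calder\'on--Zygmund-type stopping-ball argument adapted to the ball-basis setting. I start by passing to measurable covers (\pro{P1}): replace $F_\lambda=\{x\in B:|g(x)|>\lambda\}$ by a measurable $\bar F\subset B$ with $\mu(\bar F)=\mu^*(F_\lambda)\le\tfrac{\theta}{5\ZK\gamma^2}\mu(B)$. The goal is to cover $\bar F$ (up to a null set) by hulls $Q_j^*$ of countably many pairwise disjoint stopping balls $Q_j$ in which $\bar F$ has density slightly above a threshold $c$, and then to use \lem{L9} on each $Q_j^*$ to show that $\{|g|>\lambda+\|g\|_{\BMO_\alpha}\}$ occupies at most a $(1-\alpha)$-fraction of $Q_j^*$.

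\textbf{Stopping-ball construction.} Fix $c=\theta/(4\gamma^2)$. By the density property (\lem{L12}), almost every $x\in\bar F$ admits a ball $G_1^x\ni x$ with $\mu(G_1^x\cap\bar F)>c\,\mu(G_1^x)$; extend it through the doubling part of \lem{L5} to a chain $G_1^x\subset G_2^x\subset\cdots$ with $(G_k^x)^*\subset G_{k+1}^x$ and $\mu(G_{k+1}^x)\le\gamma\mu(G_k^x)$, eventually swallowing $B$. Since $\bar F\subset B$, the density of $\bar F$ in $G_k^x$ tends to zero, so I select the largest $k=k_x$ for which this density still exceeds $c$ and set $Q_x=G_{k_x}^x$. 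Using $Q_x^*\subset G_{k_x+1}^x$ and $\mu(G_{k_x+1}^x)\le\gamma\mu(Q_x)$ together with the stopping condition at step $k_x+1$, the density of $\bar F$ in $Q_x^*$ is at most $c\gamma=\theta/(4\gamma)$. The hypothesis on $\mu^*(F_\lambda)$ also forces $c\,\mu(Q_x)<\mu(\bar F)$, which yields $\mu(Q_x)<\tfrac{4}{5\ZK}\mu(B)\le\mu(B)$, so regularity will be applicable to $Q_j$ against $B$. Then \lem{L1-1} extracts a pairwise disjoint subfamily $\{Q_j\}$ with $\bar F\subset\bigcup_jQ_j^*$.

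\textbf{Per-ball $\BMO_\alpha$ bound (main obstacle).} For each $j$, \lem{L9} applied to $Q_j^*$ produces a measurable set $E_j\subset Q_j^*$ with $\mu(E_j)\ge\alpha\mu(Q_j^*)$ and $\OSC_{E_j}(g)\le\|g\|_{\BMO_\alpha}$. The crux is to find inside $E_j$ a point where $|g|\le\lambda$, and this is exactly where the regularity hypothesis enters. By \df{D3} (applied with the smaller ball $Q_j$ and the larger ball $B$, since $\mu(Q_j)\le\mu(B)$ and $Q_j\cap B\neq\varnothing$), $\mu(Q_j^*\cap B)\ge\theta\mu(Q_j^*)$; combined with $\mu(\bar F\cap Q_j^*)\le c\gamma\mu(Q_j^*)$, the ``safe'' subset $S_j:=Q_j^*\cap B\setminus\bar F$ satisfies $\mu(S_j)\ge(\theta-c\gamma)\mu(Q_j^*)$ and $|g|\le\lambda$ on $S_j$. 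The tuning $\alpha=1-\varepsilon\theta/(4\gamma^2\ZK)$ is chosen precisely so that $\alpha+(\theta-c\gamma)>1$, which forces $E_j\cap S_j\neq\varnothing$; the oscillation bound on $E_j$ then propagates $|g|\le\lambda+\|g\|_{\BMO_\alpha}$ throughout $E_j$, giving $\mu^*\bigl(\{|g|>\lambda+\|g\|_{\BMO_\alpha}\}\cap Q_j^*\bigr)\le(1-\alpha)\mu(Q_j^*)$.

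\textbf{Summation.} Summing over the disjoint $Q_j$'s and using $\mu(Q_j^*)\le\ZK\mu(Q_j)$ together with the stopping lower bound $c\,\mu(Q_j)<\mu(Q_j\cap\bar F)$,
\begin{equation*}
\mu^*\{x\in B:|g(x)|>\lambda+\|g\|_{\BMO_\alpha}\}\le\sum_j(1-\alpha)\mu(Q_j^*)\le\frac{(1-\alpha)\ZK}{c}\,\mu(\bar F)=\varepsilon\,\mu^*(F_\lambda),
\end{equation*}
the final equality being exactly why the calibrations $c=\theta/(4\gamma^2)$, $\alpha=1-\varepsilon\theta/(4\gamma^2\ZK)$, and the hypothesis constant $\theta/(5\ZK\gamma^2)$ appear in these precise forms.
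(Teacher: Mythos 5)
Your argument is correct and follows essentially the same route as the paper's proof: measurable cover of the level set, density points, stopping along the chains of \lem{L5}, the Vitali-type selection of \lem{L1-1}, \lem{L9} applied on the hulls, and regularity against $B$ to locate a point with $|g|\le\lambda$ inside each $E_j$; your calibration $c=\theta/(4\gamma^2)$ (versus the paper's $\alpha\theta/(2\gamma)$) and your counting $\mu(E_j)+\mu(S_j)>\mu(Q_j^*)$ are only cosmetic variants of the paper's computation, and all the constants check out. The only step worth writing out explicitly is the case of a finite chain (when $\mu(X)<\infty$), where ``the density tends to zero'' must be replaced by the observation that the last ball is $X$ and the density of $\bar F$ there is at most $\theta/(5\ZK\gamma^2)<c$ by hypothesis \e{x17}, which is exactly what guarantees that the stopping index exists and is not the final term.
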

\begin{proof}
	Fix a ball $B$ and a number $\lambda>0$, satisfying  \e{x17}. Then consider the set $E(\lambda)=\{x\in B:\,|g(x)|>\lambda\}$, which can be non-measurable. Choose a measurable set $F$ such that
	\begin{equation}\label{x50}
		B\supset F\supset E(\lambda),\quad \mu(F)=\mu^*(E(\lambda))\le \frac{\theta }{5\ZK\gamma^2}\cdot \mu(B) \,(\text{see } \e{x17}).
	\end{equation}
	Applying the density property (\lem{L12}) for any point $x\in F$
	we find a ball $G_0(x)\ni x$ such that $\mu(G_0(x)\cap F)\ge\alpha\theta \mu(G_0(x))/(2\gamma)$. Then, applying the second part of \lem{L5}, one can find a sequence of balls $G_0=G_0(x)\subset G_1\subset G_2\subset\ldots $, satisfying the conditions of the lemma.  {Note that this sequence of balls can be either infinite or finite. We claim that there is a biggest integer $n$ such that 
	\begin{equation}\label{x51}
		\frac{\mu(G_{n-1}\cap F)}{\mu(G_{n-1})}\ge\frac{ \alpha\theta}{2\gamma}
	\end{equation}
	and that $G_{n-1}$ is not the last term of our sequence. Indeed, the above inequality holds if $n-1=0$. Hence it is enough to show that we have reverse inequality for a nonempty subfamily of balls $G_{n}$ with $n>n_0$. Such a statement is clear if our sequence is infinite, since in that case, according to \lem{L5}, we will have 
	\begin{equation*}
		\lim_{n\to\infty}\frac{\mu(G_{n-1}\cap F)}{\mu(G_{n-1})}=0.
	\end{equation*}
	If the sequence is finite and $G_m$ is its last term, then $G_m=X$ (see \rem{R2}). Thus, using \e{x50} and \e{x74}, we obtain a reverse inequality
	\begin{equation}
		\frac{\mu(G_{m}\cap F)}{\mu(G_{m})}= \frac{\mu(F)}{\mu(X)}\le \frac{\theta }{5\ZK\gamma^2}<\frac{ \alpha\theta}{2\gamma}.
	\end{equation}
	Hence we can suppose that there is a biggest integer $n$ for which \e{x51} holds and the ball $G(x)=G_n$ satisfies
	\begin{equation}\label{x8}
		\frac{\alpha \theta}{2\gamma^2}\le \frac{\mu(G(x)\cap F)}{\mu(G(x))}<\frac{\alpha\theta}{2\gamma}\, 
	\end{equation}
where the left hand side inequality follows from \e{x71} and \e{x51}. } Since  $\mu(G_{n+1})\le \gamma \mu(G_n^*)$ and $G^*(x)=G_n^*\subset G_{n+1}$ we obtain
	\begin{equation}\label{x73}
		\frac{\mu(G^*(x)\cap F)}{\mu(G^*(x))}\le  \frac{\gamma\mu(G_{n+1}\cap F)}{ \mu(G_{n+1})}< \frac{\alpha\theta}{2}.
	\end{equation}
	The left hand side inequality in \e{x8} together with \e{x74} and \e{x50} imply
	\begin{equation}\label{x18}
		\mu(G^*(x))\le \ZK\mu(G(x))\le \frac{2\ZK\gamma^2}{\alpha\theta}\cdot \mu(F)<  \mu(B).
	\end{equation}
	Applying \lem{L1-1}, then we find a sequence of pairwise disjoint balls $\{B_k\}\subset \{G(x):\, x\in F\}$ such that
	\begin{equation}\label{x11}
		F\subset \cup_kB_k^*.
	\end{equation}
	By \lem{L9} there are sets $E_k\subset B^*_k$ such that
	\begin{align}
		&\mu(E_k)\ge \alpha\mu(B^*_k),\label{x7}\\
		&\OSC_{E_k}(g)\le \OSC_{B_k^*,\alpha}(g)\le  \|g\|_{\BMO_\alpha}.\label{x12}
	\end{align}
	Applying the regularity property (see \df{D3}) and \e{x18}, we can write $\mu(B_k^*\cap B)\ge \theta \mu(B_k^*)$. Then from \e{x73} and \e{x7} we obtain
	\begin{equation}\label{x15}
		\mu(B_k^*\cap F)<\frac{\alpha\theta\mu(B_k^*)}{2}\le \frac{\alpha}{2} \mu(B_k^*\cap B)<  \frac{1}{2} \mu(B_k^*\cap B)
	\end{equation}
	and
	\begin{align}
		\mu(E_k\cap B)&\ge  \mu(B^*_k\cap B)-\mu(B_k^*\setminus E_k)\\
		&\ge \mu(B^*_k\cap B)-(1-\alpha)\mu(B_k^*)\\
		&\ge \mu(B_k^*\cap B)-\frac{1-\alpha}{\theta}\cdot \mu(B_k^*\cap B)\\
		&\ge  \frac{1}{2}  \mu(B_k^*\cap B),\label{x14}
	\end{align}
	respectively. { In the last estimate we used the inequality $1-(1-\alpha)/\theta>1/2$, which follows directly by substituting the value of $\alpha$ from \e{x74} and taking into account the bounds of the involved parameters.} Since both $B_k^*\cap F$ and $E_k\cap B$ are subsets of the set $B_k^*\cap B$, from \e{x14} and \e{x15} we obtain
	$\mu((E_k\cap B)\setminus F)=\mu(E_k\cap B)-\mu(B_k^*\cap F)>0$.
	Thus we can find  a point $x_k\in E_k\setminus F$ so that $|g(x_k)|\le\lambda$. Then by \e{x12} for any $x\in E_k$ we will have $|g(x)-g(x_k)|\le \|g\|_{\BMO_\alpha}$ and so $|g(x)|\le \lambda+\|g\|_{\BMO_\alpha}$. This implies 
	\begin{align}
		\mu^* (E\big(\lambda+\|g\|_{\BMO_\alpha})\big)&\le \sum_k\mu(B_k^*\setminus E_k)&\quad (\hbox{see }\e{x50},\e{x11})\\
		&\le (1-\alpha) \sum_k\mu(B_k^*)&\quad (\hbox{see }\e{x7})\\
		&\le \ZK(1-\alpha) \sum_k\mu(B_k)&\\
		&\le  \frac{2\gamma^2 \ZK}{\alpha\theta}(1-\alpha) \sum_k\mu(B_k\cap F)&\quad (\hbox{see }\e{x8})\\
		&\le\varepsilon\cdot \mu(F)\\
		&= \varepsilon\cdot \mu^*(E(\lambda)).&\quad (\hbox{see }\e{x50})
	\end{align}
	Hence we get \e{x10}.
\end{proof}

\begin{proof}[Proof of \trm{T11}]
	The inclusion $ \BMO(X)\subset \BMO_\alpha(X)$ is immediate. To prove the converse embedding we let $\alpha$ be the number from \e{x74} and $\varepsilon=1/2$.  By \lem{L9} any ball $B$ contains a measurable set $E\subset B$ such that
	\begin{align}
		&\mu(E)\ge \alpha\mu(B),\\
		&\OSC_E(f)\le \OSC_{B,\alpha}(f).
	\end{align}
Thus for $a_B=(\SUP_E(f)+\INF_E(f))/2$ we have
\begin{align}
	\mu^*\{x\in B:\, &|f(x)-a_B|>\OSC_{B,\alpha}(f)\}\\
	&\le \mu^*\{x\in B:\, |f(x)-a_B|>\OSC_{E}(f)\}\\
	&\le\mu(B\setminus E) \le (1-\alpha)\mu(B),
\end{align}
and therefore
	\begin{align}
		\mu^*\{x\in B:\, &|f(x)-a_B|>\|f-a_B\|_{\BMO_\alpha}\}\\
		&=\mu^*\{x\in B:\, |f(x)-a_B|>\|f\|_{\BMO_\alpha}\}\\
		&\le  \mu^*\{x\in B:\, |f(x)-a_B|>\OSC_{B,\alpha}(f)\}\\
		&\le (1-\alpha)\mu(B)\\
		&=\frac{ \theta }{8\ZK\gamma^2}\cdot \mu(B).
	\end{align}
{Using this we can apply \pro{P} with $\varepsilon=1/2$ and $\lambda=(k-1)\|f\|_{\BMO_\alpha}$ for all integers $k\ge 2$. Hence we can write
\begin{align}
	\mu^*\{x\in B:\, |f(x)-a_B|>&k\|f\|_{\BMO_\alpha}\}\\
	&\le \frac{1}{2}\cdot\mu^*\{x\in B:\, |f(x)-a_B|>(k-1)\|f\|_{\BMO_\alpha}\}.
\end{align}
Applying this inequality consecutively for integers $k=2,3,\ldots$, we obtain the bound
\begin{equation*}
	\mu^*\{x\in B:\, |f(x)-a_B|>n\|f\|_{\BMO_\alpha}\}\le \left(\frac{1}{2}\right)^{n-1}\cdot \mu(B).
\end{equation*}
This easily implies
	\begin{equation}\label{r53}
		\frac{1}{\mu(B)}\cdot \mu^*\{x\in B:\, |f(x)-a_B|>t\}\le c_1\exp \left(\frac{-c_2t}{\|f\|_{\BMO_\alpha}}\right)
	\end{equation}
for all $t>0$, with admissible constants $c_1$ and $c_2$. }
Note that the integral of a positive non-measurable function is defined by the distribution function (see \e{r52}). Finally, from \e{r53} it follows that 
\begin{equation*}
	\langle f\rangle_{\#, B}\le 2\left(\frac{1}{\mu(B)}\int_B|f-a_B|^r\right)^{1/r}\lesssim \|f\|_{\BMO_\alpha}
\end{equation*}
for any ball $B$, and so we get $\|f\|_\BMO\lesssim 	\|f\|_{\BMO_\alpha}$. This completes the proof of \trm{T11}.
\end{proof}

\section{Examples of $\BO$ operators on BMO}
\subsection{Calder\'on-Zygmund operators} 
A Calder\'on-Zygmund operator $T$ is a linear operator such that
\begin{equation}\label{CZ}
	Tf(x)=\int_{\ZR^d}K(x,y)f(y)dy,\quad x\notin \supp f,
\end{equation}
for any compactly supported continuous function $f\in C(\ZR^d)$, where the kernel $K:\ZR^d\times \ZR^d\to \ZR$ satisfies the conditions
\begin{align}
	&|K(x,y)|\le \frac{C}{|x-y|^d}\text{ if }x\neq y,\label{x24}\\
	&|K(x,y)-K(x',y)|+|K(y,x)-K(y,x')|\le \omega\left(\frac{|x-x'|}{|x-y|}\right)\cdot \frac{1}{|x-y|^d},\\
	&\text{ whenever } |x-y|>2|x-x'|.\label{x9}
\end{align}
Here $\omega:(0,1)\to (0,1)$ is a non-decreasing function, satisfying $\omega(2x)\le c\cdot \omega(x)$ (called modulus of continuity) and the Dini condition $\int_0^1\omega(t)/tdt<\infty$. Also it is supposed that $T$ has a bounded extension on $L^2(\ZR^d)$. 
\begin{lemma}\label{L1}
	If the function $\omega$ satisfies the logarithmic Dini condition
	\begin{equation}\label{x40}
		\int_0^1\frac{\omega(t)\log(1/t)}{t}dt<\infty,
	\end{equation} 
	then the operator \e{CZ}  satisfies \e{y83}.
\end{lemma}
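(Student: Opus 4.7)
The plan is to verify \e{y83} for the Calder\'on--Zygmund operator $T$, with the weight $\omega_0(t)=\log^{-1}(1+t)$ required by \trm{T5}, by a direct kernel calculation. Fix a Euclidean ball $B$ with center $x_B$ and radius $r_B$, and pick two points $x,x'\in B$. Since $\mu(B^*)\le \ZK\mu(B)$, the hull-ball $B^*$ is contained in a Euclidean ball of radius $\lesssim r_B$; in particular for $y\notin B^*$ one has $|x-y|\gtrsim r_B\ge 2|x-x'|$, so the smoothness hypothesis \e{x9} is applicable. Starting from
$$T(f\cdot \ZI_{\ZR^d\setminus B^*})(x)-T(f\cdot \ZI_{\ZR^d\setminus B^*})(x')=\int_{\ZR^d\setminus B^*}\bigl(K(x,y)-K(x',y)\bigr)f(y)\,dy,$$
I would split the integration domain into dyadic shells $S_k=\{y:\,2^kr_B<|x-y|\le 2^{k+1}r_B\}$ for $k\ge k_0$, where $k_0$ is the smallest integer with $S_{k_0}\cap(\ZR^d\setminus B^*)\ne\varnothing$.

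Each $S_k$ is contained in a Euclidean ball $A_k\supset B$ with $\mu(A_k)\sim 2^{kd}\mu(B)$. On $S_k$ we have $|x-x'|/|x-y|\le 2^{-k+1}$, so \e{x9} together with the standard doubling $\omega(2t)\le c\omega(t)$ of the kernel modulus gives
$$|K(x,y)-K(x',y)|\lesssim \frac{\omega(2^{-k})}{(2^kr_B)^d},$$
and H\"older's inequality then yields
$$\int_{S_k}|K(x,y)-K(x',y)|\,|f(y)|\,dy\lesssim \omega(2^{-k})\,\langle f\rangle_{A_k}.$$
Summing over $k\ge k_0$ I obtain
$$\OSC_B\bigl(T(f\cdot \ZI_{\ZR^d\setminus B^*})\bigr)\lesssim \sum_{k\ge k_0}\omega(2^{-k})\,\langle f\rangle_{A_k}.$$

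To match the right-hand side of \e{y83}, set $M=\sup_{A\in\ZB,\,A\supset B}\log^{-1}(1+\mu(A)/\mu(B))\,\langle f\rangle_A$. Since $\log(1+\mu(A_k)/\mu(B))\sim k+1$, we have $\langle f\rangle_{A_k}\lesssim (k+1)M$, whence
$$\OSC_B\bigl(T(f\cdot \ZI_{\ZR^d\setminus B^*})\bigr)\lesssim M\sum_{k\ge 1}(k+1)\,\omega(2^{-k})\lesssim M\int_0^1\frac{\omega(t)\log(1/t)}{t}\,dt,$$
which is finite by the logarithmic Dini condition \e{x40}. This is precisely \e{y83} with $\ZL_{\omega_0}(T)$ controlled by a constant multiple of the log-Dini integral (the $L^2$-boundedness of $T$, which is part of the standard definition, is not needed in this estimate since the far-away integral is always absolutely convergent).

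The only substantive point, rather than a genuine obstacle, is the bookkeeping in the last step: the extra factor $\log(1/t)$ in \e{x40} is exactly what is needed to absorb the loss $\log(\mu(A_k)/\mu(B))\sim k+1$ forced by the weight $\omega_0(t)=\log^{-1}(1+t)$ in \e{y83}. Dropping that factor (the ordinary Dini condition) would instead give only the plain $\BO$-bound with $\omega_0\equiv 1$, recovering the classical fact that Dini CZ operators are $\BO$ operators.
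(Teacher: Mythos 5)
Your proof is correct and follows essentially the same route as the paper's: decompose the complement of $B^*$ into dyadic annuli, apply the kernel smoothness estimate \e{x9} on each shell, and absorb the logarithmic loss $\langle f\rangle_{A_k}\lesssim (k+1)M$ into the sum $\sum_k(k+1)\,\omega(2^{-k})$, which the logarithmic Dini condition \e{x40} renders finite. The only imprecision is the claim $|x-y|\gtrsim r_B\ge 2|x-x'|$ for arbitrary $x,x'\in B$ — this can fail when $x,x'$ are nearly antipodal, since $|x-x'|$ may be as large as $2r_B$ while $y\notin B^*$ only guarantees $|x-y|>R^*-R$ — and the paper sidesteps it by estimating $\bigl|T(f\ZI_{\ZR^d\setminus B^*})(x)-T(f\ZI_{\ZR^d\setminus B^*})(x_0)\bigr|$ against the center $x_0$ (for which $|x_0-y|>R^*\ge 2|x-x_0|$ always holds) and recovering the oscillation at the cost of a factor $2$.
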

\begin{proof}
	Let $f\in L^r(\ZR^d)$ and $B=B(x_0,R)\subset \ZR^d$ be the Euclidean open ball of a center $x_0$ and a radius $R$. One can choose $B^*=B(x_0,R^*)$ as a hull ball of $B$, where $R^*=(1+2\cdot 2^{1/d})R\in [R,5R]$. For a point $x\in B=B(x_0,R)$, using \e{x9} and \e{x40}, we get
	\begin{align}
		|T(f\cdot & \ZI_{\ZR^d\setminus B^*})(x)-T(f\cdot \ZI_{\ZR^d\setminus B^*})(x_0)|\\
		&\lesssim \int_{|y-x_0|>R^*}|K(x,y)-K(x_0,y)||f(y)|dy\\
		&\le \sum_{k=0}^\infty\int_{2^{k+1}R^*\ge |y-x_0|>2^kR^*}\omega\left(\frac{|x-x_0|}{|x_0-y|}\right)\cdot \frac{1}{|x_0-y|^d}|f(y)|dy\\
		&\lesssim  \sum_{k=1}^\infty k\omega(2^{-k})\frac{1}{k |B(x_0,2^{k+1}r)|}\int_{B(x_0,2^{k+1}r)}|f(y)|dy\\
		&\lesssim[\omega] \sup_{A\supset B}\bigg(\log^{-1}\left(1+\frac{\mu(A)}{\mu(B)}\right)\langle f\rangle_{A}\bigg),
	\end{align}
	where $\sup$ is taken over all the balls $A\supset B$. Thus we obtain \e{y83}. 
\end{proof}
{Consider the classical singular operators on $\ZR^d$, corresponding to kernels
\begin{equation}\label{SZ}
	K(x)=\frac{\Omega(x)}{|x|^d},
\end{equation}
where $\Omega(x)$ satisfies
\begin{align*}
	&\Omega(\varepsilon x)=\Omega(x),\, \varepsilon>0,\quad \int_{S^{d-1}}\Omega(x)d\sigma=0,\\
	&\omega(\delta)=\sup_{\stackrel{|x-x'|\le \delta}{|x|=|x'|=1}}|\Omega(x)-\Omega(x')|,\quad \int_0^1\frac{\omega(t)}{t}dt<\infty.
\end{align*}
(see \cite{Ste1}, chap. 2.4, for precise definition). Under these conditions the operator \e{CZ} with the kernel $K(x,y)=K(x-y)$ generates a Calderón–Zygmund operator.
\begin{lemma}\label{L2}
	The singular operator, corresponding to the kernel \e{SZ} is uniformly vanishing.
\end{lemma}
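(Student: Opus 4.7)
The plan is to fix a ball $B = B(x_0, R) \subset \ZR^d$ and $\varepsilon > 0$, and set $B' = B(x_0, \lambda R)$ concentric with $B$, where $\lambda$ is a large constant to be chosen depending only on $\varepsilon$, on the size constant of $K$, and on the modulus $\omega$. For every $B'' \supset B'$ I would split
\[
T(\ZI_{B''}) = T(\ZI_{B'}) + T(\ZI_{B'' \setminus B'})
\]
and bound $\OSC_B$ of the two summands separately. The critical design choice is that $B'$ is concentric with $B$ (not with $B''$), which is what lets the cancellation argument below close uniformly in the position of $B$ inside $B''$.

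For the first piece, which is independent of $B''$, I would exploit the spherical cancellation $\int_{S^{d-1}}\Omega\,d\sigma = 0$. The change of variable $u = x - y$ gives
\[
T(\ZI_{B'})(x) = \pv \int_{B(x - x_0,\, \lambda R)} K(u)\, du,
\]
and since $|x - x_0| \le R$, the centered ball $B(0,(\lambda-1)R)$ sits inside $B(x - x_0, \lambda R)$. The principal-value integral of $K$ over any centered ball is $0$, so only an annular crescent contained in $\{(\lambda-1)R \le |u| \le (\lambda+1)R\}$ contributes. Its measure is $\lesssim \lambda^{d-1} R^d$, and $|K(u)| \lesssim (\lambda R)^{-d}$ on it, yielding $|T(\ZI_{B'})(x)| \lesssim 1/\lambda$ uniformly for $x \in B$, hence $\OSC_B(T(\ZI_{B'})) \lesssim 1/\lambda$.

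For the second piece I would invoke the H\"ormander-type smoothness of $K$. For $y \in B'' \setminus B'$ and $x, x' \in B$, once $\lambda$ is large enough so that $|x - y| \ge (\lambda - 1)R > 2|x - x'|$, the smoothness bound delivers
\[
|K(x - y) - K(x' - y)| \lesssim \omega\!\left(\frac{R}{|y - x_0|}\right) |y - x_0|^{-d}.
\]
Passing to polar coordinates around $x_0$ and substituting $t = R/|y - x_0|$ produces
\[
\OSC_B(T(\ZI_{B'' \setminus B'})) \lesssim \int_0^{1/\lambda} \frac{\omega(t)}{t}\, dt,
\]
which vanishes as $\lambda \to \infty$ by the Dini hypothesis on $\omega$.

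Combining, $\OSC_B(T(\ZI_{B''})) \lesssim 1/\lambda + \int_0^{1/\lambda}\omega(t)/t\,dt$, which I can drive below $\varepsilon$ by taking $\lambda$ sufficiently large. Uniformity over the family is automatic, since every estimate above depends only on the size constant $C$ in $|K| \lesssim |\cdot|^{-d}$, on the modulus $\omega$, and on the mean-zero property of $\Omega$, all of which are intrinsic to the kernel and thus shared by the entire family. The step requiring the most care is the principal-value justification of the cancellation in the first piece, together with the crescent measure estimate for $B(x - x_0, \lambda R) \setminus B(0, (\lambda-1)R)$; the remaining computations are a standard dyadic summation controlled by the Dini integral.
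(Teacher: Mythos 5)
Your proof is correct, but it goes by a genuinely different route than the paper's. The paper never splits off a near part at all: it writes
\begin{equation*}
T(\ZI_{B''})(x)-T(\ZI_{B''})(x')=\int_{(x-B'')\setminus(x'-B'')}K(u)\,du-\int_{(x'-B'')\setminus(x-B'')}K(u)\,du,
\end{equation*}
observes that the symmetric difference $(x-B'')\triangle(x'-B'')$ of two translates of the ball $B''$ is, in each radial direction from the origin, an interval of length at most $|x-x'|\le 2r$ lying at distance at least $R-r$ from the origin, and concludes $\OSC_B(T(\ZI_{B''}))\lesssim \|\Omega\|_\infty\, r/(R-r)$ using only the size bound $|K(u)|\le\|\Omega\|_\infty|u|^{-d}$. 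That argument needs neither the cancellation $\int_{S^{d-1}}\Omega\,d\sigma=0$ nor the Dini condition, so it covers any bounded homogeneous $\Omega$; on the other hand it leans on $B''$ being a Euclidean ball (more precisely, on its translates having thin symmetric difference in polar coordinates). Your concentric splitting $T(\ZI_{B'})+T(\ZI_{B''\setminus B'})$ pays for the extra hypotheses --- the mean-zero property to kill the principal value over centered balls, and Dini regularity (plus the elementary $|\,|u|^{-d}-|u'|^{-d}|\lesssim|u-u'|/|u|^{d+1}$ term, which you should note contributes an extra harmless $\int_0^{1/\lambda}dt=1/\lambda$ to your Dini integral) --- but in exchange it is indifferent to the geometry of $B''$: only $B''\supset B'$ is used, so the far part is handled exactly as in the standard H\"ormander argument. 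Both proofs share the delicate point that $T(\ZI_{B''})(x)$ for $x$ inside the support must be read as a principal value; you flag this explicitly, the paper handles it implicitly by passing immediately to the difference, where the singularities cancel. Either way the bound depends only on $\|\Omega\|_\infty$, the Dini integral and the radii, so uniformity over a family with uniform kernel constants is automatic, as you say.
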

\begin{proof}
	For an arbitrary ball $B=B(x_0,r)$ consider a ball $B'=B(x_0,R)$ with a radius $R>r$. Let $B''\supset B'$ be an arbitrary ball. For $x,x'\in B$ we have 
	\begin{align*}
		T(\ZI_{B''})(x)-T(\ZI_{B''})(x')&=\int_{B''}K(x-y)dy-\int_{B''}K(x'-y)dy\\
		&=\int_{x-B''}K(y)dy-\int_{x'-B''}K(y)dy\\
		&=\int_{(x-B'')\setminus (x'-B'')}K(y)dy-\int_{(x'-B'')\setminus (x-B'')}K(y)dy.
	\end{align*}
	Thus,
	\begin{equation}
		|T(\ZI_{B''})(x)-T(\ZI_{B''})(x')|\le \int_{(x-B'')\triangle (x'-B'')}|K(y)|dy.
	\end{equation}
	Observe that the set $(x-B'')\triangle (x'-B'')$ is in the $r$ neighborhood of the boundary $\partial(B'')$. That is
	\begin{equation}
		(x-B'')\triangle (x'-B'')\subset \{y:\, \dist(y,\partial B'')<r\}\subset (B(x_0, R-r))^c.
	\end{equation}
	Thus by passing to spherical coordinates, we can write
	\begin{equation*}
		\int_{(x-B'')\triangle (x'-B'')}|K(y)|dy=c_d\int_{S^{d-1}}\int_{t_1(s)}^{t_2(s)}t^{d-1}|K(t\cdot s)|dtds,
	\end{equation*}
	where $t_2(s)-t_1(s)<2r$ and $t_1(s)>R-r$. Then we obtain
	\begin{align}
		|T(\ZI_{B''})(x)-T(\ZI_{B''})(x')|&\le c_d \|\Omega\|_\infty\int_{S^{d-1}}\int_{t_1(s)}^{t_2(s)}\frac{t^{d-1}}{t^d}dtds\\
		&\le c_d\|\Omega\|_\infty \int_{S^{d-1}}\frac{2r}{t_1(s)}ds\\
		&\le c_d\|\Omega\|_\infty \frac{2r}{R-r}.
	\end{align}
	The latter can be less than any $\varepsilon>0$ by a suitable choice of $R$. This implies that $\OSC_B(T(\ZI_{B''}))\le \varepsilon$
	for any ball $B''\supset B'$. That means $T$ is uniformly vanishing.
\end{proof}
Observe that under the logarithmic Dini condition \e{x40}, the singular operator corresponding to kernel \e{SZ}  satisfies \e{y83} according to  \lem{L1}. Applying also \lem{L2}, we obtain
\begin{theorem}
	If a modulus of continuity $\omega(t)$ satisfies the condition \e{x40},
	then the singular operator $T$, corresponding to the kernel \e{SZ} is bounded on $\BMO$. Namely, for any $f\in L^r(\ZR^d)\cap \BMO(\ZR^d)$ we have $\|T(f)\|_\BMO\lesssim \|f\|_\BMO$. 
\end{theorem}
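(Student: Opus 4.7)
My plan is to obtain the theorem as an immediate consequence of \cor{KC2} applied to the singleton family $\{T\}$, working on the doubling ball-basis of Euclidean balls in $\ZR^d$. This reduces matters to checking the three hypotheses of \trm{T5} for $T$ and a conveniently chosen exponent $r$.

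First I will invoke \lem{L1}: the logarithmic Dini condition \e{x40} is precisely what the hypothesis of \trm{T5} requires, since it puts $T$ into $\BO_\omega(\ZR^d)$ with exactly the modulus $\omega(t)=\log^{-1}(1+t)$ and a finite constant $\ZL_\omega(T)$. Second, \lem{L2} shows that $T$ is vanishing in the sense of \df{D2}; viewing $T$ as a one-element family of operators, the uniform vanishing condition then holds trivially.

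The only hypothesis not already supplied by the preceding sections is the weak-type bound $\|T\|_{L^r(\ZR^d)\to L^{r,\infty}(\ZR^d)}<\infty$ for some $1\le r<\infty$. This is entirely classical: under the stated assumptions on $\Omega$ (homogeneity of degree $0$, mean-value-zero on $S^{d-1}$, and Dini regularity of its modulus of continuity) the convolution operator with kernel \e{SZ} is a standard Calder\'on–Zygmund operator, in particular bounded on $L^2(\ZR^d)$ and, via the Calder\'on–Zygmund decomposition, of weak type $(1,1)$. Marcinkiewicz interpolation (\otrm{M}) then yields the strong $L^r$ bound for every $1<r<\infty$; concretely, I will take $r=2$ so that the $L^2$ boundedness built into the definition of a Calder\'on–Zygmund operator already suffices.

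With all three hypotheses of \trm{T5} verified, \cor{KC2} immediately delivers $\|T(f)\|_{\BMO}\lesssim\|f\|_{\BMO}$ for every $f\in L^r(\ZR^d)\cap \BMO(\ZR^d)$, which is exactly the claim. The main, and essentially only, obstacle is recalling the classical weak-$(1,1)$ estimate; all the genuinely new content — the $\BO_\omega$ localization \e{y83}, the uniform vanishing, and the abstract $\BMO$-boundedness theorem underlying \cor{KC2} — has already been established in earlier sections.
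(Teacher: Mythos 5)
Your proposal is correct and takes essentially the same route as the paper, which also deduces the theorem by combining \lem{L1} (the $\BO_\omega$ property \e{y83} with $\omega(t)=\log^{-1}(1+t)$ under the logarithmic Dini condition \e{x40}) with \lem{L2} (uniform vanishing of the operator with kernel \e{SZ}) and then applying \trm{T5} together with \cor{KC2}. The only difference is that you make explicit the remaining hypothesis $\|T\|_{L^r(\ZR^d)\to L^{r,\infty}(\ZR^d)}<\infty$ (most cleanly with $r=2$, where it is part of the Calder\'on--Zygmund definition), a point the paper leaves implicit.
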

}
\subsection{Carleson type operators}
Now let $K(x,y)$ be a Calder\'on-Zygmund kernel with a modulus of continuity, satisfying \e{x40}. Consider the family of operators 
\begin{equation}
	T_\alpha f(x)=\int_{\ZR^d}K(x,y)g_\alpha(x)f(y)dy,
\end{equation}
where $\sup_\alpha \|g_\alpha\|_\infty\le 1$. The argument of \lem{L1} shows that the operators $T_\alpha$ satisfy \e{y83} uniformly. Thus, we can say that  $T(f)=\sup_{\alpha}|T_\alpha(f)|$
satisfies the hypothesis of \trm{T1}, provided $\|T\|_{L^r\to L^{r,\infty}}<\infty$. Hence, applying \trm{T11}, we conclude that $T$ maps boundedly $L^\infty(\ZR^d)$ into $\BMO(\ZR^d)$. This can be applied for Carleson type operators. Namely, 
recently P.~Zorin-Kranich \cite{Zor} considered operators
\begin{equation}
	Tf(x)=\sup_{Q\in \ZQ_d}\left|\int_{\ZR^d}K(x,y)e^{2\pi i Q(y)}f(y)dy\right|
\end{equation}
where $K(x,y):\ZR^d\times \ZR^d\to \ZR$ is a H\"older continuous Calder\'on-Zygmund kernel and $\ZQ^n$ denotes the family of polynomials in $d$ variables with degree at most $n$. It was proved that operators \e{x8} are bounded on $L^r(\ZR^d)$ for every $1<r<\infty$. In the case of Hilbert kernel $K(x,y)=(x-y)^{-1}$, this was earlier proved by V.~Lie in \cite{Lie}. The H\"older continuous Calder\'on-Zygmund kernels satisfy \e{x9} with $w(t)=t^\delta$, $\delta>0$, and so we will have \e{x40}. Thus, applying Theorems \ref{T1} and \ref{T11}, we arrive to the following
\begin{theorem}\label{T2}
	Let $\{Q_\alpha\}$ be a family of polynomials from $ \ZQ^n$ and let $K(x,y)$ be a Calder\'on-Zygmund kernel with the modulus of continuity $w(t)=t^\delta$, $\delta>0$. Then for the operator 
	\begin{equation}
		Tf(x)=\sup_{\alpha}\left|\int_{\ZR^d}K(x,y)e^{2\pi i Q_\alpha(y)}f(y)dy\right|
	\end{equation}
	we have bounds \e{y44} and \e{y45}. In particular, the operator $T$ maps $L^\infty$ into $\BMO$ and
	\begin{equation*}
		\|T(f)\|_{\BMO(\ZR^d)}\lesssim \|f\|_\infty.
	\end{equation*}
\end{theorem}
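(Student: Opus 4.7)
The plan is to reduce Theorem~\ref{T2} to Theorem~\ref{T1} followed by Corollary~\ref{KC1}, applied to the family of linear operators
\[
T_\alpha f(x)=\int_{\ZR^d}K(x,y)\,e^{2\pi i Q_\alpha(y)}f(y)\,dy,
\]
so that the maximal operator in the statement is exactly $T^{*}=\sup_\alpha |T_\alpha|$ in the notation of \e{r50}. Three items need to be verified for this reduction: uniform membership of the $T_\alpha$'s in $\BO(\ZR^d)$, a weak-$L^r$ bound for $T^{*}$ for some $1<r<\infty$, and the passage from the oscillation bound \e{y44} to the $\BMO$ bound \e{y45}.

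For the first item I would run the proof of Lemma~\ref{L1} verbatim on each $T_\alpha$. That argument uses only continuity of the kernel in the first variable and the modulation factor is unimodular, so
\[
\bigl|K(x,y)\,e^{2\pi i Q_\alpha(y)}-K(x_0,y)\,e^{2\pi i Q_\alpha(y)}\bigr|=|K(x,y)-K(x_0,y)|,
\]
and the bound depends neither on $Q_\alpha$ nor on $\alpha$. Since the H\"older modulus $w(t)=t^\delta$ trivially obeys the logarithmic Dini condition \e{x40}, Lemma~\ref{L1} yields $T_\alpha\in \BO_\omega(\ZR^d)$ with $\omega(t)=\log^{-1}(1+t)$ and with $\sup_\alpha \ZL_\omega(T_\alpha)<\infty$. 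Because $\omega\le 1$, the localization inequality \e{y83} with this $\omega$ implies the one with $\omega\equiv 1$, so $\sup_\alpha\ZL(T_\alpha)<\infty$ in the ordinary $\BO$ sense required by Theorem~\ref{T1}.

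For the weak-$L^r$ bound I would cite the strong-$L^r$ theorem of Zorin-Kranich \cite{Zor} (and of Lie \cite{Lie} for $d=1$), which is precisely the statement that $T^{*}$ is bounded on $L^r(\ZR^d)$ for every $1<r<\infty$; since strong type $(r,r)$ trivially implies weak type $(r,r)$, the hypothesis \e{y84} is in hand. The possible non-measurability of the uncountable supremum is already accommodated by the outer-measure framework of \sect{S1} that underlies Theorem~\ref{T1} (cf.\ \rem{R1}).

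Both hypotheses of Theorem~\ref{T1} being in place, the oscillation bound \e{y44} follows at once for $T=T^{*}$, and Corollary~\ref{KC1} upgrades it to the $\BMO(\ZR^d)$ estimate \e{y45}. I do not foresee any genuine obstacle: the argument is a direct assembly of Lemma~\ref{L1}, the cited $L^r$-boundedness theorem, and the abstract machinery of Theorems~\ref{T1} and \ref{T11} already developed in the paper. The only step that really requires attention is the uniformity of the $\BO$ constant across $\alpha$, which is immediate from the unimodularity of $e^{2\pi i Q_\alpha(y)}$.
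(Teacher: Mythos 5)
Your proposal is correct and follows essentially the same route as the paper: the author likewise observes that the argument of Lemma~\ref{L1} applies uniformly in $\alpha$ because the modulation is unimodular (and $t^{\delta}$ satisfies \e{x40}), imports the $L^r$-boundedness of the maximal operator from Zorin-Kranich and Lie to get \e{y84}, and then invokes Theorems~\ref{T1} and \ref{T11} (i.e.\ Corollary~\ref{KC1}) to pass from the oscillation bound \e{y44} to the $\BMO$ bound \e{y45}. Your explicit remark that $\omega\le 1$ makes the $\BO_\omega$ condition imply the plain $\BO$ condition required by Theorem~\ref{T1} is a detail the paper leaves implicit, but it is the same argument.
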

\begin{remark}
	The same argument is applicable for Calder\'on-Zygmund operators on bounded intervals, including periodic cases. In particular, the result of \trm{T2} also holds 
	for the maximal Fourier partial sums in trigonometric and Walsh systems.
\end{remark}
\subsection{Martingales}
Let our measure space $X$ be finite and let $\zB_n$ be a filtration defined in $X$.  We suppose that $\zB=\cup_{n\ge 0}\zB_n$ is the corresponding ball-basis and it satisfies the doubling condition. Given $f\in L^1(X)$ consider the martingale difference 
\begin{equation}
	\Delta_Af(x)=\sum_{B:\, \pr(B)=A}\left(\frac{1}{\mu(B)}\int_Bf-\frac{1}{\mu(A)}\int_Af\right)\ZI_B(x).
\end{equation}
corresponding to our filtration. Recall the definition of Burkholder's \cite{Burk} martingale transform, namely the operators of the form
\begin{equation}\label{x43}
		M_\varepsilon f(x)=\sum_{A\in \zB}\varepsilon_{A}\Delta_A f(x),\quad |\varepsilon_A|\le 1.
\end{equation}
 We consider also other operators related to \e{x43}:
\begin{align}
	&M_{\varepsilon,n}f(x)=\sum_{A\in \cup_{0\le k\le n}\zB_k}\varepsilon_{A}\Delta_A f(x),\label{x45}\\
	&M_\varepsilon^* f(x)=\sup_{n}|M_{\varepsilon,n}f(x)|,\label{x46}\\
	&M_\varepsilon^+ f(x)=\sup_{n}M_{\varepsilon,n}f(x),\label{x47}\\
	&M_\varepsilon^- f(x)=\inf_{n}M_{\varepsilon,n}f(x),\label{x48}\\
	&Sf(x)=\left(\sum_{A\in \zB}\left|\Delta_A f(x)\right|^2\right)^{1/2}.\label{x49}
\end{align}
Letting $T$ to be one of the these operators, one can check that
\begin{equation}\label{x53}
	\OSC_B\big(T (f\cdot \ZI_{X\setminus B^*})\big)=0.
\end{equation}
for any ball $B\in \cup_{k\le n}\zB_k$. {Indeed, if $A\subseteq B^*$, then $\Delta_A(f\cdot \ZI_{X\setminus B^*})$ is identically zero. Otherwise we have $B^*\subsetneq A$ and therefore $\Delta_A(f\cdot \ZI_{X\setminus B^*})$ is constant on $B^*$. All these imply that for all above operators $T (f\cdot \ZI_{X\setminus B^*})$ is constant on $B$. Thus we get \e{x53}.} This means all these operators are $\BO$ operators, since \e{y83} holds for any modulus of continuity $\omega$. Also, it is easy to check that all of them are  uniformly vanishing. It is well known that these operators are bounded on $L^p$ spaces if $1<p\le \infty$ even if the filtration is not doubling (see \cite{Chao1,Chao2}). Thus we can say that all those operators satisfy bound \e{y30} and are bounded on $\BMO(X)$.
\subsection{Wavelet type systems}
Recall the definition of biorthogonal wavelet type system $\Phi=\{\phi_k(x), \psi_n(x)\}_{n=0}^\infty$ on $[0,1]$. We will use the function
\begin{equation}\label{u51}
	\xi(x)=\frac{1}{(1+|x|)^{1+\delta}},\quad 0<\delta<1,
\end{equation}
as well as the notations 
\begin{align}
	&t_1=\frac{1}{2},\quad t_k=\frac{2j-1}{2^{n+1}},\quad k\ge 2,\\
	&\text{where }	k=2^n+j,\, 1\le j\le 2^n,\,n=0,1,2,\ldots.\label{u52}
\end{align}
Hence we suppose 
\begin{align}
	&\phi_0(x)=\psi_0(x)\equiv 1,\label{u65}\\
	&|\phi_k(x)|+|\psi_k(x)|\le c2^{n/2}\cdot \xi\big(2^n(x-t_k)\big),\quad k\ge 1,\label{u43}\\
	&|\phi_k(t)-\phi_k(t')|+|\psi_k(t)-\psi_k(t')|\\
	&\qquad\qquad\qquad\le c2^{n/2}(2^n|t-t'|)^\alpha \cdot \xi\big(2^n(t-t_k)\big)\text { if }|t-t'|\le 2^{-n}, \label{u44}
\end{align}
where $0<\alpha\le 1$.  It is well known that the expansion
\begin{equation*}
	\sum_{n=0}^\infty \langle f,\psi_n\rangle \phi_n(x),\quad \langle f,\psi_n\rangle =\int_0^1f(t)\psi_n(t)dt,
\end{equation*}
of any function $f\in L^p$, $1<p<\infty$, in a wavelet type system converges in $L^p$ unconditionally. Moreover, the wavelet type systems share the following property of unconditional bases of $L^p$:  for any sequence $\Lambda=\{\lambda_k\}$, $\|\Lambda\|=\sup_k|\lambda_k|\le 1$, and a function $f\in L^p$, $1<p<\infty$, the series $T_\Lambda(f)=\sum_k\lambda_k\langle f,\psi_k\rangle \phi_k$ converges in $L^p$ and $\left\|T_\Lambda(f)\right\|_p\lesssim \|f\|_p$.
This property can be established by a well-known argument (see for example \cite{HeWe}), proving that $K_\lambda(x,t)=\sum_k \lambda_k\phi_k(x)\phi_k(t)$ is a Calder\'on-Zygmund kernel, namely 
\begin{align}
	&\left|K_\Lambda(x,y)\right|+\left|K_\Lambda(y,x)\right|\lesssim \frac{1}{|x-y|},\label{y1}\\
	&|K_\Lambda(x,y)-K_\Lambda(x,y')|+|K_\Lambda(y,x)-K_\Lambda(y',x)|\lesssim \frac{|y-y'|^\beta }{|x-y|^{1+\beta}},\quad \beta>0,\label{y2}
\end{align}
whenever $|x-y|>2|y-y'|$. Moreover, if in addition $\{\phi_k\}$ is a basis in $L^2[0,1]$, then it becomes an unconditional basis in all spaces $L^p[0,1]$, $1<p<\infty$ (see for example \cite{HeWe}). Concerning to the unconditional wavelet bases in $L^p$, we can also refer the readers to the papers \cite{Mey, Woj, Grip, Wol, StSj}. Examples of wavelet type systems are many classical wavelet systems, orthonormal spline systems on $[0,1]$. Wavelet type systems were considered also on atomic Hardy space $H^1[0,1]$. The existence of an unconditional basis in $H^1[0,1]$ first was proved by Maurey \cite{Mau}. Later L. Carleson  \cite{Car} constructed an explicit sequence in $\BMO$ whose biorthogonal system forms an unconditional basis in $H^1$.  Wojtaszczyk  \cite{Woj1} proved that Franklin system (the picewise linear orthonormal spline system on $[0,1]$) is an unconditional basis for $H^1[0,1]$. This was the first explicit example of orthonormal system, which is an unconditional basis for $H^1[0,1]$. 

Hence we can say that the operators $T_\Lambda$ with $\|\Lambda\|\le 1$ form a family of Calder\'on-Zygmund operators with uniformly bounded constants. Clearly, those also are uniformly vanishing. Indeed, from \e{u65} it follows that $T_\lambda(\ZI_{[0,1]})\equiv \lambda_0$ and then, $\OSC_{[0,1]}(T_\lambda(\ZI_{[0,1]}))=0$. Choose an arbitrary family of sequences $\Lambda_\alpha=\{\lambda_k^\alpha\}$ with $\|\Lambda_\alpha\|\le 1$. This generate the maximal operators
\begin{equation}\label{r48}
	T^*(f)=\sup_\alpha |T_{\Lambda_\alpha}(f)|,\quad T^\pm(f)=\sup_\alpha (\pm  T_{\Lambda_\alpha}(f))
\end{equation}
which satisfy the hypothesis of \trm{T5}. Thus we conclude 
\begin{theorem}
	Operators in \e{r48} satisfy the bound \e{y30}. In particular, they map $\BMO[0,1]$ into itself.
\end{theorem}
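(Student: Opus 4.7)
The plan is to verify the hypotheses of \trm{T5} for the family $\{T_{\Lambda_\alpha}\}$ of wavelet multiplier operators on $[0,1]$; once these are in place, the bound \e{y30} for $T\in\{T^+,T^-,T^*\}$ follows directly, and the boundedness on $\BMO([0,1])$ is then a consequence of \cor{KC2}.

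First I would check that each $T_{\Lambda_\alpha}$ lies in $\BO_\omega$ with $\omega(t)=\log^{-1}(1+t)$, uniformly in $\alpha$. By the estimates \e{y1}--\e{y2}, which were established for wavelet type systems from the size and smoothness bounds \e{u43}--\e{u44}, the kernel $K_{\Lambda_\alpha}(x,y)=\sum_k\lambda_k^\alpha\phi_k(x)\psi_k(y)$ is a Calder\'on--Zygmund kernel with H\"older modulus $t^\beta$, and since $\|\Lambda_\alpha\|\le 1$ the implicit constants in \e{y1}--\e{y2} are independent of $\alpha$. The H\"older modulus trivially satisfies the logarithmic Dini condition \e{x40}, so \lem{L1} applies uniformly and yields \e{y83} with $\omega(t)=\log^{-1}(1+t)$ together with $\sup_\alpha \ZL_\omega(T_{\Lambda_\alpha})<\infty$.

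Next I would verify uniform vanishing. From \e{u65} we have $\phi_0\equiv\psi_0\equiv 1$, so $T_{\Lambda_\alpha}(\ZI_{[0,1]})=\lambda_0^\alpha\phi_0\equiv\lambda_0^\alpha$ is a constant function for every $\alpha$. Since $\mu([0,1])<\infty$, \lem{L0} guarantees that $[0,1]$ itself belongs to the ball-basis; the remark following \trm{T5} then implies that each $T_{\Lambda_\alpha}$ is vanishing with witness ball $B'=[0,1]$, the same choice working for all $\alpha$, whence the family is uniformly vanishing.

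The main obstacle is the weak-type bound $\|T\|_{L^r\to L^{r,\infty}}<\infty$ demanded by \e{y84}, since the supremum defining $T^\pm,T^*$ ranges over a family of Calder\'on--Zygmund operators and such a maximal operator need not, a priori, be of weak type. However, by the unconditional basis property of $\{\phi_k\}$ in $L^r$ (quoted in the paragraph preceding \e{y1}), each individual $T_{\Lambda_\alpha}$ is $L^r$-bounded for $1<r<\infty$ with norm uniform in $\alpha$. Combined with the uniform H\"older kernel estimate, a standard linearization followed by a Cotlar-type argument --- selecting a measurable $\alpha(x)$ nearly attaining the supremum and controlling the resulting pseudo-linear operator by the Hardy--Littlewood maximal function plus a uniformly $L^r$-bounded Calder\'on--Zygmund operator --- gives $\|T\|_{L^r\to L^{r,\infty}}<\infty$ for every $r\in(1,\infty)$. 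With all three hypotheses verified, \trm{T5} delivers \e{y30} at once, and \cor{KC2} then produces $\|T(f)\|_{\BMO}\lesssim\|f\|_{\BMO}$ for every $f\in\BMO([0,1])\cap L^r([0,1])$.
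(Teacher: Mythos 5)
Your verification of the first two hypotheses of \trm{T5} coincides with the paper's own argument: the uniform Calder\'on--Zygmund estimates \e{y1}--\e{y2} with H\"older modulus give \e{y83} with $\omega(t)=\log^{-1}(1+t)$ via \lem{L1} (whose proof transfers to the bounded interval $[0,1]$), and uniform vanishing follows from $T_{\Lambda_\alpha}(\ZI_{[0,1]})\equiv\lambda_0^\alpha$ together with \lem{L0} and the remark after \trm{T5}. Up to that point you are reproducing the paper.

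The gap is in your treatment of the weak-type hypothesis. A Cotlar-type inequality controls the supremum of the \emph{truncations of a single} Calder\'on--Zygmund operator; it does not control $\sup_\alpha|T_{\Lambda_\alpha}f|$ when the $T_{\Lambda_\alpha}$ are genuinely different operators. After linearization, the operator $f\mapsto T_{\Lambda_{\alpha(x)}}f(x)$ has a kernel satisfying the size and regularity bounds uniformly in $x$, but its $L^2$-boundedness is exactly the hard, Carleson-operator-type issue, and it is false for a general family: taking all multiplier sequences with $\|\Lambda\|\le 1$ gives $T^*f(x)=\sum_k|\langle f,\psi_k\rangle|\,|\phi_k(x)|$, which is not of weak type $(r,r)$ (unconditional convergence in $L^r$ controls the square function, not the $\ell^1$ sum). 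So no argument can establish $\|T\|_{L^r\to L^{r,\infty}}<\infty$ for an arbitrary family $\{\Lambda_\alpha\}$. The paper does not attempt this either: the weak norm appears as a factor on the right-hand side of \e{y30}, so the stated bound is to be read with \e{y84} as a standing hypothesis, exactly as in \trm{T5}; and in the intended application --- unconditionality of wavelet type bases in $H^1$ --- only singleton families $\{T_\Lambda\}$ are needed, for which the weak $(r,r)$ bound is classical Calder\'on--Zygmund theory. You should either restrict to that setting or carry \e{y84} as an explicit assumption rather than claim to derive it.
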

Recall that $\BMO[0,1]$ is the dual space of $H^1[0,1]$. So by the duality argument, for $f\in H^1$ and $g\in \BMO$ we have
\begin{equation}
	\langle T_\Lambda(f),g\rangle= \langle f,\bar T_\Lambda(g)\rangle\le \| f\|_{H^1}\|\cdot \|\bar T_\Lambda(g)\|_\BMO\lesssim  \| f\|_{H^1}\cdot  \| g\|_{\BMO},
\end{equation}
where $\bar T_\Lambda$ denotes the adjoint operator of $T_\Lambda$. This implies the estimate $\|T_\Lambda(f)\|_{H^1}\lesssim \| f\|_{H^1}$, which is the characteristic bound for the unconditional basis in $H^1$. Hence we can get the following.
\begin{corollary}
	If a biorthogonal wavelet type system is a basis in $L^2[0,1]$, then it is an unconditional basis in $H^1[0,1]$.
\end{corollary}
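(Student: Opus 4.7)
The plan is to combine the $\BMO$-boundedness of the operators $T_\Lambda$ just established with Fefferman's $H^1$--$\BMO$ duality and the standard characterization of unconditional bases via multiplier operators.

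First, by the theorem immediately preceding the corollary, for any bounded scalar sequence $\Lambda=\{\lambda_k\}$ with $\|\Lambda\|_\infty\le 1$ the operator $T_\Lambda(f)=\sum_k\lambda_k\langle f,\psi_k\rangle\phi_k$ maps $\BMO[0,1]$ into itself, with an admissible constant independent of $\Lambda$. The biorthogonal system $\{\psi_k,\phi_k\}$ obtained by swapping the roles of $\phi$ and $\psi$ still satisfies the wavelet type conditions \e{u65}--\e{u44}, and is still an $L^2$-basis, so the adjoint operator $\bar T_\Lambda(g)=\sum_k\lambda_k\langle g,\phi_k\rangle\psi_k$ falls under the same theorem and satisfies $\|\bar T_\Lambda(g)\|_{\BMO}\lesssim \|g\|_{\BMO}$ uniformly in $\Lambda$.

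Second, I apply Fefferman duality: for any $f$ in a dense subspace of $H^1[0,1]$ (finite linear combinations of the $\phi_k$, which are dense via the atomic decomposition and the $L^p$-basis property for $p$ slightly larger than $1$), the computation already carried out in the paragraph above the corollary gives
\begin{equation*}
	|\langle T_\Lambda(f),g\rangle|=|\langle f,\bar T_\Lambda(g)\rangle|\le \|f\|_{H^1}\cdot\|\bar T_\Lambda(g)\|_{\BMO}\lesssim \|f\|_{H^1}\cdot\|g\|_{\BMO}.
\end{equation*}
Taking the supremum over $g\in\BMO$ with $\|g\|_{\BMO}\le 1$ and extending by density yields $\|T_\Lambda(f)\|_{H^1}\lesssim \|f\|_{H^1}$ with a constant independent of $\Lambda$.

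Third, this uniform bound over all choices of $\Lambda$ with $\|\Lambda\|_\infty\le 1$ is the standard characterization of an unconditional basis: applied to sign sequences $\varepsilon_k\in\{\pm1\}$ supported on arbitrary finite index sets, it says that every sign-change / finite-truncation operator is bounded on $H^1$ uniformly, and together with density of finite expansions in $H^1$ this forces the series $\sum_k\langle f,\psi_k\rangle\phi_k$ to converge unconditionally in the $H^1$ norm to $f$. The main obstacle is really just making the adjoint step clean: one has to verify carefully that the swapped system inherits the wavelet type conditions and the $L^2$-basis property, so that the preceding $\BMO$-theorem truly applies to $\bar T_\Lambda$ with uniform constants; the rest is duality bookkeeping plus an approximation argument on the dense class of finite wavelet sums.
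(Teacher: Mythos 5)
Your proof is correct and follows essentially the same route as the paper: the paper's own argument is exactly the duality computation $\langle T_\Lambda(f),g\rangle=\langle f,\bar T_\Lambda(g)\rangle\le \|f\|_{H^1}\cdot\|\bar T_\Lambda(g)\|_{\BMO}\lesssim \|f\|_{H^1}\cdot\|g\|_{\BMO}$, yielding the uniform bound $\|T_\Lambda(f)\|_{H^1}\lesssim\|f\|_{H^1}$ characteristic of an unconditional basis. Your extra care in checking that the adjoint $\bar T_\Lambda$ (the system with $\phi_k$ and $\psi_k$ swapped) still satisfies the wavelet type conditions is a detail the paper leaves implicit, and it is justified since conditions \e{u43}--\e{u44} are symmetric in $\phi_k$ and $\psi_k$.
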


\begin{bibdiv}
\begin{biblist}
	\bib{BDS}{article}{
	author={Bennett, Colin},
	author={DeVore, Ronald A. },
	author={ Sharpley, Robert},
	title={Weak-$L^\infty$ and $\BMO$},
	journal={Annals of Math.},
	volume={113},
	date={1981},
	number={3},
	pages={601--611},
	doi={10.2307/2006999},
}
\bib{Burk}{article}{
	author={Burkholder, D. L.},
	title={Martingale transforms},
	journal={Ann. Math. Statist.},
	volume={37},
	date={1966},
	pages={1494--1504},
	issn={0003-4851},
	review={\MR{208647}},
	doi={10.1214/aoms/1177699141},
}
\bib{Car}{article}{
	author={Carleson, Lennart},
	title={An explicit unconditional basis in $H^{1}$},
	language={English, with French summary},
	journal={Bull. Sci. Math. (2)},
	volume={104},
	date={1980},
	number={4},
	pages={405--416},
	issn={0007-4497},
	review={\MR{602408}},
}
\bib{Chao1}{article}{
	author={Chao, J.-A.},
	author={Long, R.-L.},
	title={Martingale transforms with unbounded multipliers},
	journal={Proc. Amer. Math. Soc.},
	volume={114},
	date={1992},
	number={3},
	pages={831--838},
	issn={0002-9939},
	review={\MR{1079887}},
	doi={10.2307/2159413},
}
\bib{Chao2}{article}{
	author={Chao, J.-A.},
	author={Long, R.-L.},
	title={Martingale transforms and Hardy spaces},
	journal={Probab. Theory Related Fields},
	volume={91},
	date={1992},
	number={3-4},
	pages={399--404},
	issn={0178-8051},
	review={\MR{1151803}},
	doi={10.1007/BF01192064},
}

\bib{Fef}{article}{
	author={Fefferman, Charles},
	title={Characterizations of bounded mean oscillation},
	journal={Bull. Amer. Math. Soc.},
	volume={77},
	date={1971},
	pages={587--588},
	issn={0002-9904},
	review={\MR{280994}},
	doi={10.1090/S0002-9904-1971-12763-5},
}
\bib{FeSt}{article}{
	author={Fefferman, C.},
	author={Stein, E. M.},
	title={$H^{p}$ spaces of several variables},
	journal={Acta Math.},
	volume={129},
	date={1972},
	number={3-4},
	pages={137--193},
	issn={0001-5962},
	review={\MR{447953}},
	doi={10.1007/BF02392215},
}

\bib{Ming}{article}{
	author={Cao, Mingming},
	author={Ibañez-Firnkorn, Gonzalo },
	author={Rivera-Ríos, Israel P. },
	author={Xue, Qingying }
	author={Yabuta, Kôzô  }
	title={A class of multilinear bounded oscillation operators on measure spaces and applications},
	journal={Math. Ann.},
	volume={291},
	date={2018},
	number={11-12},
	pages={1908--1918},
	issn={0025-584X},
	review={\MR{3844813}},
	doi={10.1002/mana.201700318},
}
\bib{Grip}{article}{
	author={Gripenberg, Gustaf},
	title={Wavelet bases in $L^p({\bf R})$},
	journal={Studia Math.},
	volume={106},
	date={1993},
	number={2},
	pages={175--187},
	issn={0039-3223},
	review={\MR{1240312}},
	doi={10.4064/sm-106-2-175-187},
}
\bib{HeWe}{book}{
	author={Hern\'{a}ndez, Eugenio},
	author={Weiss, Guido},
	title={A first course on wavelets},
	series={Studies in Advanced Mathematics},
	note={With a foreword by Yves Meyer},
	publisher={CRC Press, Boca Raton, FL},
	date={1996},
	pages={xx+489},
	isbn={0-8493-8274-2},
	review={\MR{1408902}},
	doi={10.1201/9781420049985},
}

\bib{John}{article}{
	author={John, F.},
	title={Quasi-isometric mappings},
	conference={
		title={Seminari 1962/63 Anal. Alg. Geom. e Topol., Vol. 2, Ist. Naz.
			Alta Mat},
	},
	book={
		publisher={Ed. Cremonese, Rome},
	},
	date={1965},
	pages={462--473},
	review={\MR{190905}},
}
\bib{JoNi}{article}{
	author={John, F.},
	author={Nirenberg, L.},
	title={On functions of bounded mean oscillation},
	journal={Comm. Pure Appl. Math.},
	volume={14},
	date={1961},
	pages={415--426},
	issn={0010-3640},
	review={\MR{131498}},
	doi={10.1002/cpa.3160140317},
}

\bib{Kar3}{article}{
	author={Karagulyan, Grigori A.},
	title={An abstract theory of singular operators},
	journal={Trans. Amer. Math. Soc.},
	volume={372},
	date={2019},
	number={7},
	pages={4761--4803},
	issn={0002-9947},
	review={\MR{4009440}},
	doi={10.1090/tran/7722},
}
\bib{Kar1}{article}{
	author={Karagulyan, Grigori A.},
	title={On good-$\lambda$ inequalities for couples of measurable
		functions},
	journal={Indiana Univ. Math. J.},
	volume={70},
	date={2021},
	number={6},
	pages={2405--2425},
	issn={0022-2518},
	review={\MR{4359914}},
	doi={10.1512/iumj.2021.70.8722},
}

\bib{Lie}{article}{
	author={Lie, Victor},
	title={The polynomial Carleson operator},
	journal={Ann. of Math. (2)},
	volume={192},
	date={2020},
	number={1},
	pages={47--163},
	issn={0003-486X},
	review={\MR{4125450}},
	doi={10.4007/annals.2020.192.1.2},
}
\bib{Mau}{article}{
	author={Maurey, Bernard},
	title={Isomorphismes entre espaces $H_{1}$},
	language={French},
	journal={Acta Math.},
	volume={145},
	date={1980},
	number={1-2},
	pages={79--120},
	issn={0001-5962},
	review={\MR{586594}},
	doi={10.1007/BF02414186},
}
	\bib{Mey}{book}{
	author={Meyer, Yves},
	title={Ondelettes et op\'{e}rateurs. II},
	language={French},
	series={Actualit\'{e}s Math\'{e}matiques. [Current Mathematical Topics]},
	note={Op\'{e}rateurs de Calder\'{o}n-Zygmund. [Calder\'{o}n-Zygmund operators]},
	publisher={Hermann, Paris},
	date={1990},
	pages={i--xii and 217--384},
	isbn={2-7056-6126-7},
	review={\MR{1085488}},
}

\bib{Pee}{article}{
	author={Peetre, Jaak},
	title={On convolution operators leaving $L^{p,}\,^{\lambda }$ spaces
		invariant},
	journal={Ann. Mat. Pura Appl. (4)},
	volume={72},
	date={1966},
	pages={295--304},
	issn={0003-4622},
	review={\MR{209917}},
	doi={10.1007/BF02414340},
}
\bib{Spa}{article}{
	author={Spanne, Sven},
	title={Some function spaces defined using the mean oscillation over
		cubes},
	journal={Ann. Scuola Norm. Sup. Pisa Cl. Sci. (3)},
	volume={19},
	date={1965},
	pages={593--608},
	issn={0391-173X},
	review={\MR{190729}},
}
\bib{Ste}{book}{
	author={Stein, Elias M.},
	title={Harmonic analysis: real-variable methods, orthogonality, and
		oscillatory integrals},
	series={Princeton Mathematical Series},
	volume={43},
	note={With the assistance of Timothy S. Murphy;
		Monographs in Harmonic Analysis, III},
	publisher={Princeton University Press, Princeton, NJ},
	date={1993},
	pages={xiv+695},
	isbn={0-691-03216-5},
	review={\MR{1232192}},
}
\bib{Ste1}{book}{
	author={Stein, Elias M.},
	title={Singular integrals and differentiability properties of functions},
	series={Princeton Mathematical Series, No. 30},
	publisher={Princeton University Press, Princeton, N.J.},
	date={1970},
	pages={xiv+290},
	review={\MR{0290095}},
}
\bib{Str}{article}{
	author={Str\"{o}mberg, Jan-Olov},
	title={Bounded mean oscillation with Orlicz norms and duality of Hardy
		spaces},
	journal={Indiana Univ. Math. J.},
	volume={28},
	date={1979},
	number={3},
	pages={511--544},
	issn={0022-2518},
	review={\MR{529683}},
	doi={10.1512/iumj.1979.28.28037},
}
\bib{StSj}{article}{
	author={Sj\"{o}lin, Per},
	author={Str\"{o}mberg, Jan-Olov},
	title={Spline systems as bases in Hardy spaces},
	journal={Israel J. Math.},
	volume={45},
	date={1983},
	number={2-3},
	pages={147--156},
	issn={0021-2172},
	review={\MR{719116}},
	doi={10.1007/BF02774013},
}

\bib{Woj}{article}{
	author={Wojtaszczyk, P.},
	title={Wavelets as unconditional bases in $L_p({\bf R})$},
	journal={J. Fourier Anal. Appl.},
	volume={5},
	date={1999},
	number={1},
	pages={73--85},
	issn={1069-5869},
	review={\MR{1682254}},
	doi={10.1007/BF01274190},
}
\bib{Woj1}{article}{
	author={Wojtaszczyk, P.},
	title={The Franklin system is an unconditional basis in $H_{1}$},
	journal={Ark. Mat.},
	volume={20},
	date={1982},
	number={2},
	pages={293--300},
	issn={0004-2080},
	review={\MR{686177}},
	doi={10.1007/BF02390514},
}
\bib{Wol}{article}{
	author={Wolnik, B.},
	title={The wavelet type systems},
	journal={Banach center publications},
	volume={72},
	date={2006},
	pages={397-406},
}
\bib{Zor}{article}{
	author={Zorin-Kranich, Pavel},
	title={Maximal polynomial modulations of singular integrals},
	journal={Adv. Math.},
	volume={386},
	date={2021},
	pages={Paper No. 107832, 40},
	issn={0001-8708},
	review={\MR{4270523}},
	doi={10.1016/j.aim.2021.107832},
}
\bib{Zyg}{book}{
	author={Zygmund, Antoni},
	title={Trigonometric series. 2nd ed. Vol. 2},
	publisher={Cambridge University Press, New York},
	date={1959},
	review={\MR{0107776}},
}
\end{biblist}
\end{bibdiv}
\end{document}